\numberwithin{equation}{section}
\newtheorem{Theorem}{Theorem}[section]
\newtheorem{Lemma}[Theorem]{Lemma}
\newtheorem{Corollary}[Theorem]{Corollary}
\newtheorem{Proposition}[Theorem]{Proposition}
\newtheorem{Definition}[Theorem]{Definition}
\newtheorem{Remark}[Theorem]{Remark}
\newtheorem{Example}[Theorem]{Example}
\newcommand{\Bset}{\mathbb{B}}
\newcommand{\Cset}{\mathbb{C}}
\newcommand{\Nset}{\mathbb{N}}
\newcommand{\Pset}{\mathbb{P}}
\newcommand{\Sset}{\mathbb{S}}
\newcommand{\cA}{\ensuremath{{\mathcal A}}\xspace}         
\newcommand{\cB}{\ensuremath{{\mathcal B}}\xspace}         
\newcommand{\cH}{\ensuremath{{\mathcal H}}\xspace}         
\newcommand{\cN}{\ensuremath{{\mathcal N}}\xspace}         
\newcommand{\cT}{\ensuremath{{\mathcal T}}\xspace}         
\newcommand{\cZ}{\ensuremath{{\mathcal Z}}\xspace}         
\newcommand{\ii}{\mathbf{i}}
\newcommand{\jj}{\mathbf{j}}
\newcommand{\1}{\ensuremath{{\rm 1\kern-.25em l}}\xspace}  
\newcommand{\Ad}{\mathop{\mathrm{Ad}}}                     
\newcommand{\trace}{\operatorname{tr}}        
\newcommand{\Aut}[1]{\ensuremath{\operatorname{Aut}(#1)}\xspace}  
\newcommand{\End}[1]{\ensuremath{\operatorname{End}(#1)}\xspace}  
\newcommand{\set}[2]{\mathopen{\{}#1\mathop{|}#2\mathclose{\}}}
\newcommand{\vN}{\operatorname{vN}}
\newsavebox{\artin}
\newcommand{\masterartin}{
\linethickness{1pt}
\qbezier(10,20)(0,10)(-10,0)
\qbezier(-10,20)(-7,17)(-4,14)
\qbezier(10,0)(6,4)(4,6)
}
\newsavebox{\artininv}
\newcommand{\masterartininv}{
\linethickness{1pt}
\qbezier(-10,20)(0,10)(10,0)
\qbezier(10,20)(6,16)(4,14) 
\qbezier(-10,0)(-5,5)(-4,6)
}
\newsavebox{\strandr}
\newcommand{\masterstrandr}{
\linethickness{1pt}
\qbezier(10,20)(10,10)(10,0)
}
\newsavebox{\strandl}
\newcommand{\masterstrandl}{
\linethickness{1pt}
\qbezier(-10,20)(-10,10)(-10,0)
}
\newsavebox{\horizontaldots}
\newcommand{\masterhorizontaldots}{
\linethickness{1pt}
\put(5,0){\circle*{2}}
\put(11,0){\circle*{2}}
\put(17,0){\circle*{2}}
}
\begin{document}
\title{Noncommutative independence in the infinite
braid and symmetric group}
\author[R. Gohm]{Rolf Gohm}
\author[C. K\"ostler]{Claus K\"ostler}
\address{Institute of Mathematics and Physics, Aberystwyth University, Aberystwyth, SY23 3BZ,UK}
\address{Institute of Mathematics and Physics, Aberystwyth University, Aberystwyth, SY23 3BZ,UK}
\email{rog@aber.ac.uk}
\email{cck@aber.ac.uk} 

\begin{abstract}
This is an introductory paper about our recent merge of a noncommutative de
Finetti type result with representations of the infinite braid and symmetric
group which allows to derive factorization properties from symmetries. We
explain some of the main ideas of this approach and work out a constructive
procedure to use in applications. Finally we illustrate the method by
applying it to the theory of group characters.
\end{abstract}

\maketitle

\section*{Introduction}
\label{section:intro}
In our recent papers \cite{Koes10a,GoKo09a,GoKo10a} a systematical theory is emerging how
representations of the infinite braid and symmetric group on noncommutative
probability spaces give rise to noncommutative conditional independence and
to factorization properties. These papers are rather long and, in parts,
rather technical (and should be consulted for further background and further references), so it is timely to give a short introductory paper. This is done here. 
The diligent reader of the longer versions will notice that
there are also some new turns and twists in this new presentation but our
main objective is to give a readable guide for a main highway through the
forest.

To achieve this we sacrifice generality at several points, for example we
consider single operators as noncommutative random variables instead of the
more general concept of algebra embeddings. Distributional symmetries such
as exchangeability and spreadability are introduced in this setting and
reformulated in terms of endomorphisms. We make the very important
observation that instead of the representation of the infinite symmetric
group $\Sset_\infty$ associated to exchangeability it is enough for the
basic implications to consider a representation of the infinite braid group
$\Bset_\infty$ instead. This observation enormously broadens the range of
applicability when combined with further insights: a constructive procedure
to produce braidable sequences, the equality of the tail algebra of the
random sequence and the fixed point algebra of the representation (in the
minimal situation), last but not least the noncommutative de Finetti theorem
which derives conditional independence over the tail algebra from
spreadability. Taken together we have a systematic way to derive structural
properties, in particular factorization properties of the state, from
symmetries.

To show some of these applications we concentrate on a specific and
beautiful class of examples: the theory of characters of the infinite braid
and symmetric group. We show how our theory can be used to study the left
regular representation of $\Bset_\infty$ and we sketch some of the main
ingredients of a new proof of Thoma's theorem on extremal characters of
$\Sset_\infty$. Open questions that can be studied with these methods,
especially in the braid group setting, suggest themselves.

\section{Tracial noncommutative probability spaces}
\label{section:prelim}
Throughout this paper a (\emph{noncommutative}) \emph{tracial probability space} $(\cA,\trace)$ consists of a von Neumann algebra $\cA$ acting on the separable Hilbert space $\cH$ and a tracial faithful normal state $\trace \colon \cA \to \Cset$. An element $x \in \cA$ is called a (\emph{noncommutative}) \emph{random variable}. If $x_0, x_1, x_2, \ldots$ is a sequence of random variables in $\cA$, we write $\vN(x_0, x_1, x_2, \ldots)$ for the generated von Neumann subalgebra in $\cA$.
\begin{Example} \normalfont \label{ex:classical}
This setting covers the classical case of bounded (complex or real valued) random variables $x_i$ on some standard probability space $(\Omega, \Sigma, \Pset)$. More precisely, the $x_i$'s are elements of $L^\infty(\Omega, \Sigma, \Pset)$ and act by (left) multiplication on $L^2(\Omega, \Sigma, \Pset)$, the essentially bounded resp.~square-integrable Lebesgue-measurable functions on $(\Omega, \Sigma, \Pset)$. Further the trace on $L^\infty(\Omega, \Sigma, \Pset)$ is given by the expectation 
$f \mapsto \int_{\Omega} f d\Pset$. 
\end{Example}

\begin{Example} \normalfont \label{ex:group}
An interesting class of noncommutative probability spaces arises in the following manner. Let $G$ be a countable group. A positive definite function $\chi\colon G \to \Cset$ is called a \emph{character} if $\chi$ is constant on conjugacy classes of $G$ and  normalized at the identity $e$ of $G$. It is a folklore result (reviewed in \cite{GoKo10a}, for example) that a character $\chi$ gives rise to a unitary representation $\pi$ of $G$ on a separable Hilbert space $\cH$ and a vector $\xi \in \cH$ such that 
\[
\chi(g^{-1}h) = \langle \pi(g) \xi, \pi(h)\xi \rangle   
\]
for all $g, h \in G$. Further $\trace := \langle \xi, \bullet \,\xi \rangle$ is a tracial faithful normal state on $\cA = \vN\set{\pi(g)}{g\in G}$. Conversely, restricting a tracial state on a group algebra gives rise to a character of the group. It is well-known that $\cA$ is a factor (in the sense of von Neumann algebras) if and only if $\chi$ is extremal. Note also that each unitary $\pi(g)$ is a noncommutative random variable.   
\end{Example}
The following two choices for the group $G$ in Example \ref{ex:group} are of particular interest in the sequel.
\begin{Example} \normalfont \label{ex:braid}
The infinite braid group $\Bset_\infty$ is the inductive limit of 
the braid groups $\Bset_n$ for $n \to \infty$. It is presented by the 
Artin generators $\sigma_1,\sigma_2, \ldots$ satisfying the relations
\begin{align}
&&\sigma_i \sigma_{j} \sigma_i 
&= \sigma_{j} \sigma_i \sigma_{j} 
&\text{if $ \; \mid i-j \mid\, = 1 $;}&& \tag{B1} \label{eq:B1}\\
&&\sigma_i \sigma_j 
&= \sigma_j \sigma_i  
&\text{if $ \; \mid i-j \mid\, > 1 $.}&& \tag{B2} \label{eq:B2}
\end{align} 
The Artin generator $\sigma_i$ and its inverse $\sigma_i^{-1}$ are
presented as geometric braids according to Figure \ref{figure:artin}.
\begin{figure}[h]
\setlength{\unitlength}{0.3mm}
\begin{picture}(360,35)
\savebox{\artin}(20,20)[1]{\masterartin} 
\savebox{\artininv}(20,20)[1]{\masterartininv} 
\savebox{\strandr}(20,20)[1]{\masterstrandr} 
\savebox{\strandl}(20,20)[1]{\masterstrandl} 
\savebox{\horizontaldots}(20,20)[1]{\masterhorizontaldots}
\put(0,00){\usebox{\strandl}}  
\put(0,00){\usebox{\strandr}}
\put(20,00){\usebox{\horizontaldots}}
\put(60,0){\usebox{\strandl}}
\put(80,0){\usebox{\artin}}
\put(100,0){\usebox{\strandr}}
\put(120,0){\usebox{\horizontaldots}}
\put(-1,25){\footnotesize{$0$}}
\put(19,25){\footnotesize{$1$}}
\put(72,25){\footnotesize{$i-1$}}
\put(100,25){\footnotesize{$i$}}
\put(200,00){\usebox{\strandl}}  
\put(200,00){\usebox{\strandr}}
\put(220,00){\usebox{\horizontaldots}}
\put(260,00){\usebox{\strandl}}
\put(280,00){\usebox{\artininv}}
\put(300,00){\usebox{\strandr}}
\put(320,00){\usebox{\horizontaldots}}
\put(199,25){\footnotesize{$0$}}
\put(219,25){\footnotesize{$1$}}
\put(272,25){\footnotesize{$i-1$}}
\put(300,25){\footnotesize{$i$}}
\end{picture}
\caption{Artin generators $\sigma_i$ (left) and $\sigma_i^{-1}$ (right)}
\label{figure:artin}
\end{figure}
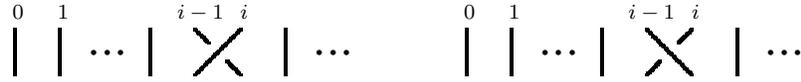
We will later meet a special choice of the character, 
$\chi(g)= 0$ for $g \neq e$. This choice yields for the probability space $\cA$ the group von Neumann algebra $L(\Bset_\infty)$, a non-hyperfinite II$_1$ factor.  
\end{Example}
\begin{Example} \normalfont
The infinite symmetric group $\Sset_\infty$ is the inductive limit of 
the symmetric groups $\Sset_n$ for $n \to \infty$, in other words: the group 
of all finite permutations. It is presented by the Coxeter generators $\sigma_1,\sigma_2, \ldots$ satisfying \eqref{eq:B1}, \eqref{eq:B2} and the additional relations
\begin{align}
&&\sigma_i^2  
&= e  
&\text{($i \in \Nset$).}&& \tag{S} \label{eq:S}
\end{align} 
Here the Coxeter generators are realized as permutations acting on the set $\Nset_0= \{0,1,2,\ldots\}$ such that $\sigma_i$ sends $(0,1,\ldots,
i-1,i, \ldots)$ to $(0,1,\ldots, i,i-1, \ldots)$. Extremal characters (except the trivial or alternating character) yield a hyperfinite II$_{1}$ factor as probability space $\cA$. Note also that the random variables $\pi(\sigma_i)$ are selfadjoint and 
correspond to classical $\{-1,1\}$-valued random variables embedded into $\cA$ in a noncommutative fashion according to the braid relations \eqref{eq:B1} and \eqref{eq:B2}.  
\end{Example}

We fix some additional notation as needed in the sequel. If $\cB$ is a von Neumann subalgebra of $\cA$, then $E_\cB$ denotes the $\trace$-preserving conditional expectation from $\cA$ onto $\cB$. (Such a conditional expectation uniquely exists in our tracial setting. The non-tracial case would require an additional modular condition.) $\End{\cA,\trace}$ denotes the $\trace$-preserving endomorphisms of the von Neumann algebra $\cA$. The fixed point algebra of $\alpha \in \End{\cA,\trace}$ is denoted by $\cA^\alpha$. Similarly, $\Aut{\cA, \trace}$ denotes the $\trace$-preserving automorphisms of $\cA$. 

\section{Distributional symmetries}\label{section:ds}
We are interested in certain distributional symmetries of infinite sequences of random variables. Given the probability space $(\cA,\trace)$, then two sequences $(x_n)_{n \ge 0}$ and $(y_n)_{n\ge0}$ in $\cA$ are said to have the same joint *-moments, in symbols:
\[
(x_0, x_1, x_2, \ldots) \stackrel{\trace}{=}  (y_0, y_1, y_2, \ldots),
\]
if, for every $n \in \Nset$,  
\begin{align}\label{eq:def-ds}
\trace(x_{\ii(1)}^{\epsilon_1} x_{\ii(2)}^{\epsilon_2} \cdots x_{\ii(n)}^{\epsilon_n})
=\trace(y_{\ii(1)}^{\epsilon_1} y_{\ii(2)}^{\epsilon_2} \cdots y_{\ii(n)}^{\epsilon_n})
\end{align}   
for all $\ii \colon \{1,2,\ldots, n\} \to \Nset_0$
and $\epsilon_1, \epsilon_2, \ldots, \epsilon_n \in \{1, *\}$. 
Note that this family of equations extends immediately to polynomials 
in the random variables such that they have the same joint *-distribution:
\[
\trace\Big(P(x_1, x_1^*,\ldots, x_s, x_s^*)\Big) 
=  \trace\Big(P(y_1, y_1^*,\ldots, y_s, y_s^*)\Big)
\qquad 
(s \in \Nset, P \in \Cset\langle x_1^{}, x_1^*, \ldots, x_s^{}, x_s^* \rangle).
\] 
We refer the reader to \cite{NiSp06a} for more detailed information on joint *-monomials and joint *-distributions.

\begin{Definition}\label{def:ds} \normalfont
Let $(\cA,\trace)$ be a tracial probability space. The sequence $(x_n)_{n \ge 0} \subset\cA$ is 
\begin{enumerate}
\item[(i)]
\emph{exchangeable} if
\[
(x_0, x_1, x_2, \ldots) \stackrel{\trace}{=}
(x_{\pi(0)}, x_{\pi(1)}, x_{\pi(2)}, \ldots)
\] 
for all $\pi \in \Sset_\infty$;  
\item[(ii)]
\emph{spreadable} if
\[
(x_0, x_1, x_2, \ldots) \stackrel{\trace}{=}
(x_{n_0}, x_{n_1}, x_{n_2}, \ldots)
\] 
for any (increasing) subsequence $(n_0, n_1, n_2, \ldots)$ of $(0,1,2,\ldots)$;  
\item[(iii)]
\emph{stationary} if
\[
(x_0, x_1, x_2, \ldots) \stackrel{\trace}{=}
(x_{k}, x_{k+1}, x_{k+2}, \ldots)
\] 
for all $k \in \Nset$; 
\item[(iv)]
\emph{identically distributed} if 
\[
(x_0, x_0, x_0, \ldots) \stackrel{\trace}{=}
(x_{k}, x_{k}, x_{k}, \ldots)
\] 
for all $k \in \Nset$. 
\end{enumerate}
\end{Definition}
\begin{Lemma} \label{lem:ds}
One has the following hierarchy of distributional symmetries in Definition \ref{def:ds}:
\begin{center}
 (i) $\Rightarrow$ (ii) $\Rightarrow$ (iii) $\Rightarrow$ (iv)
\end{center} 
\end{Lemma}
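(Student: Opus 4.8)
The plan is to establish the three implications in Lemma~\ref{lem:ds} by showing that each distributional symmetry is witnessed by a special case of the permutations (or subsequences) allowed in the stronger preceding notion. All four notions are statements of the form ``$(x_n)_{n\ge0} \stackrel{\trace}{=} (x_{f(n)})_{n\ge0}$'' for $f$ ranging over a suitable family of maps $\Nset_0 \to \Nset_0$, so the whole lemma reduces to the observation that these families are nested. Concretely, exchangeability is the case where $f$ runs over all bijections of $\Nset_0$ moving only finitely many points; spreadability is the case where $f$ runs over all strictly increasing maps $\Nset_0 \to \Nset_0$; stationarity is the case where $f$ runs over the shifts $f(n) = n+k$, $k \in \Nset$; and identical distribution is the case where $f$ runs over the constant-offset diagonal maps encoded by comparing $(x_0,x_0,\ldots)$ with $(x_k,x_k,\ldots)$.

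First I would prove (ii) $\Rightarrow$ (iii): each shift $n \mapsto n+k$ is a strictly increasing map of $\Nset_0$ into itself, i.e. the subsequence $(k, k+1, k+2, \ldots)$ is an (increasing) subsequence of $(0,1,2,\ldots)$, so the defining equality for spreadability with this particular subsequence is exactly the defining equality for stationarity. Next, (iii) $\Rightarrow$ (iv): fixing $k$ and using stationarity we have $(x_0,x_1,x_2,\ldots) \stackrel{\trace}{=} (x_k,x_{k+1},x_{k+2},\ldots)$; restricting the test functions $\ii\colon\{1,\ldots,n\}\to\Nset_0$ in \eqref{eq:def-ds} to the constant function $\ii \equiv 0$ gives $\trace(x_0^{\epsilon_1}\cdots x_0^{\epsilon_n}) = \trace(x_k^{\epsilon_1}\cdots x_k^{\epsilon_n})$ for all choices of exponents, which is precisely the assertion $(x_0,x_0,x_0,\ldots)\stackrel{\trace}{=}(x_k,x_k,x_k,\ldots)$.

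The implication (i) $\Rightarrow$ (ii) is the one requiring a genuine (if short) argument, since an increasing subsequence $(n_0 < n_1 < n_2 < \cdots)$ is in general \emph{not} of the form $(\pi(0),\pi(1),\pi(2),\ldots)$ for a single finite permutation $\pi$ — the map $i \mapsto n_i$ is injective but typically not surjective onto $\Nset_0$, and not finitely supported. I expect this to be the main obstacle, and the standard way around it is a finite-truncation plus approximation argument: fix $n$ and a test function $\ii\colon\{1,\ldots,n\}\to\Nset_0$, let $N$ be larger than both $\max_k \ii(k)$ and $\max_k n_{\ii(k)}$, and choose a finite permutation $\pi \in \Sset_\infty$ that agrees with $i \mapsto n_i$ on the finite set $\{0,1,\ldots,N\}$ (such a $\pi$ exists because $i \mapsto n_i$ is injective on that finite set, so it extends to a bijection of a large finite block of $\Nset_0$ fixing everything else). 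Then exchangeability applied to $\pi$ yields $\trace(x_{\ii(1)}^{\epsilon_1}\cdots x_{\ii(n)}^{\epsilon_n}) = \trace(x_{\pi(\ii(1))}^{\epsilon_1}\cdots x_{\pi(\ii(n))}^{\epsilon_n}) = \trace(x_{n_{\ii(1)}}^{\epsilon_1}\cdots x_{n_{\ii(n)}}^{\epsilon_n})$, which is the spreadability equality for this $\ii$; since $n$ and $\ii$ were arbitrary, (ii) follows. Assembling the three implications gives the hierarchy (i) $\Rightarrow$ (ii) $\Rightarrow$ (iii) $\Rightarrow$ (iv).
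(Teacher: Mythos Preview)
Your proof is correct and follows essentially the same approach as the paper's. The paper's argument is terser---it simply notes that each joint $*$-moment involves only finitely many indices, so the initial segment $n_0 < n_1 < \cdots < n_k$ of any subsequence can be realized by some $\pi \in \Sset_\infty$, and declares the remaining implications evident---but your version spells out exactly the same ideas.
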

\begin{proof}
Exchangeability implies spreadability since each formula for the joined *-moments involves only finitely many $x_i$'s. 
If $n_0 < n_1 < n_2 < ... < n_k$ is the beginning of a subsequence
of $(0,1,2,\ldots)$, then we can always find a permutation $\pi \in \Sset_\infty$ such that $\pi(i) = n_i$. This shows (i) $\Rightarrow$ (ii). All other implications are evident. 
\end{proof}

Equivalent formulations of (i) to (iii) in Definition \ref{def:ds} are available in a minimal setting of the probability space. Here we meet again the infinite symmetric group, but now acting as automorphisms on the von Neumann algebra. 
\begin{Proposition}\label{prop:ds}
Let $(\cA, \trace)$ be a tracial probability space and suppose $\cA = \vN(x_0, x_1, x_2, \ldots)$. 
\begin{enumerate}
\item 
$(x_n)_{n \ge 0}$ is exchangeable iff there exists a representation $\rho \colon \Sset_\infty \to \Aut{\cA,\trace}$ such that
\begin{align}
x_n &= \rho(\sigma_n \sigma_{n-1} \cdots \sigma_1) x_0  & (n \ge 1)& \tag{PR} \label{eq:PR}\\
x_0 &= \rho(\sigma_n)x_0  &(n \ge 2)&. \tag{L} \label{eq:L} 
\end{align}
\item 
$(x_n)_{n \ge 0}$ is spreadable iff there exist endomorphisms $(\alpha_N)_{N \ge 0} \subset \End{\cA,\trace}$, also called \emph{partial shifts}, such that
\begin{eqnarray} \label{eq:partialshift}
\alpha_{N}(x_n) = 
\begin{cases}
x_n & \text{ for $n < N$}\\
x_{n+1} & \text{ for $n \ge N$}
\end{cases}
\qquad \qquad (n, N \in \Nset_0).
\end{eqnarray}
\item 
$(x_n)_{n \ge 0}$ is stationary iff there exists an endomorphism $\alpha \subset \End{\cA,\trace}$ such that
\begin{eqnarray*}
\alpha(x_n) = x_{n+1}. \qquad \qquad (n \in \Nset_0).
\end{eqnarray*}
\end{enumerate}
\end{Proposition}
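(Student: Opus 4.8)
The three parts are proved by the same template: the ``if'' direction is routine (the existence of the endomorphism/automorphism transporting the sequence gives at once the required equality of joint $*$-moments), so the work lies in the ``only if'' direction, where one must \emph{build} the endomorphisms out of the distributional symmetry. The crucial tool is the following elementary but decisive fact in the tracial setting: if $\cA = \vN(z_0, z_1, \ldots)$ and $(z_n)_{n\ge 0} \stackrel{\trace}{=} (w_n)_{n\ge 0}$, then the map $z_n \mapsto w_n$ extends to a well-defined normal $*$-homomorphism $\cA \to \vN(w_0, w_1, \ldots) \subseteq \cA$ which intertwines the traces. This is because a $*$-polynomial in the $z_n$ that represents $0$ in $\cA$ has all its $\trace$-moments against other such polynomials equal to $0$ (faithfulness and traciality of $\trace$ let one test via $\trace(p(\zz)^* q(\zz))$ for all $q$), hence the corresponding polynomial in the $w_n$ also represents $0$; boundedness is inherited because $\|p(\zz)\|$ is determined by the moments $\trace\big((p(\zz)^*p(\zz))^k\big)$ as $k\to\infty$, which agree with the $\zz$-side. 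I would state and prove this as a preliminary lemma (or cite the analogous statement from the longer papers), since all three parts reduce to it.

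\textbf{Part (2), spreadability.} Fix $N\in\Nset_0$ and apply the preliminary lemma to the two sequences $(x_0,x_1,x_2,\ldots)$ and $(x_0,\ldots,x_{N-1},x_N,x_{N+1},\ldots)$ reindexed so that the second is the image of $(0,1,2,\ldots)$ under the subsequence $(0,1,\ldots,N-1,N+1,N+2,\ldots)$; spreadability is exactly the assertion that these two sequences have the same joint $*$-moments. The lemma produces a $\trace$-preserving normal $*$-homomorphism $\alpha_N \colon \cA \to \cA$ with $\alpha_N(x_n) = x_n$ for $n<N$ and $\alpha_N(x_n) = x_{n+1}$ for $n \ge N$, which is precisely \eqref{eq:partialshift}. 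Conversely, given such $(\alpha_N)$, composing partial shifts $\alpha_{n_k}\circ\cdots$ realises an arbitrary spreading as an application of a $\trace$-preserving endomorphism, so joint $*$-moments are preserved; one should note that any increasing injection of $\{0,1,\ldots\}$ into itself is a (possibly infinite) composite of the elementary ``insert one gap'' maps, and that on each fixed finite collection of moments only finitely many factors act nontrivially, so the infinite composition causes no difficulty.

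\textbf{Part (3), stationarity}, is the same argument with the single pair $(x_0,x_1,\ldots)$ versus $(x_1,x_2,\ldots)$: the lemma gives $\alpha\in\End{\cA,\trace}$ with $\alpha(x_n)=x_{n+1}$, and conversely iterating $\alpha$ yields all the shift identities of Definition~\ref{def:ds}(iii).

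\textbf{Part (1), exchangeability}, is where the genuine content is, and I expect the construction of the \emph{automorphisms} $\rho(\sigma_i)$ to be the main obstacle. Here one cannot simply read off a homomorphism from a single moment identity; instead one uses that $\Sset_\infty$ is generated by the transpositions $\sigma_i = (i-1\;i)$, applies the preliminary lemma to the pair $(x_0,x_1,\ldots)$ versus $(x_{\sigma_i(0)},x_{\sigma_i(1)},\ldots)$ to obtain a $\trace$-preserving \emph{endomorphism} $\rho(\sigma_i)$, and then observes it is surjective (its range contains every $x_n$, since $\sigma_i$ permutes the index set bijectively) hence an automorphism, with $\rho(\sigma_i)^{-1}=\rho(\sigma_i)$ forced by $\sigma_i^2=e$ via uniqueness of the extension. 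One must then check that $i\mapsto\rho(\sigma_i)$ respects the Coxeter relations \eqref{eq:B1}, \eqref{eq:B2}, \eqref{eq:S}: each relation, say $\sigma_i\sigma_{i+1}\sigma_i=\sigma_{i+1}\sigma_i\sigma_{i+1}$, is an equality of permutations of $\Nset_0$, and both sides act on the generators $x_n$ by the corresponding permutation of indices, so the two composite endomorphisms agree on a generating set and hence coincide; this gives the representation $\rho\colon\Sset_\infty\to\Aut{\cA,\trace}$. Finally one derives the normal forms \eqref{eq:PR} and \eqref{eq:L}: writing $\tau_n := \sigma_n\sigma_{n-1}\cdots\sigma_1$, a direct computation in $\Sset_\infty$ shows $\tau_n$ sends $0\mapsto n$ and therefore $\rho(\tau_n)x_0 = x_{\tau_n(0)} = x_n$, while $\sigma_n$ (for $n\ge 2$) fixes $0$, giving $\rho(\sigma_n)x_0=x_0$; conversely, if a representation $\rho$ satisfying \eqref{eq:PR}--\eqref{eq:L} is given, then for any $\pi\in\Sset_\infty$ the automorphism $\rho(\pi)$ is $\trace$-preserving and sends $x_n\mapsto x_{\pi(n)}$ (check this on the generators $\sigma_i$ of $\Sset_\infty$, reducing via \eqref{eq:PR}--\eqref{eq:L} and the braid relations to the statement that $\rho(\sigma_i)$ swaps $x_{i-1}$ and $x_i$ and fixes the rest), whence the joint $*$-moments are invariant under $\pi$ and $(x_n)$ is exchangeable. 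The one point demanding care is the passage from ``$\rho(\sigma_i)$ permutes the listed generators correctly'' to ``$\rho(\sigma_i)x_n = x_{\sigma_i(n)}$ for all $n$ simultaneously'', i.e.\ that the single normal-form data \eqref{eq:PR}--\eqref{eq:L} together with the group relations already pin down the action on every $x_n$; this is a short induction on $n$ using $x_{n+1}=\rho(\sigma_{n+1})\cdots\rho(\sigma_1)x_0$ and the commutation relations \eqref{eq:B2}.
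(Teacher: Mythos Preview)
Your proposal is correct and follows the standard approach; the paper itself does not give a proof but simply remarks that ``the proof is not difficult'' and refers to \cite[Theorem 1.9]{GoKo09a}, \cite[Lemma 8.6]{Koes10a}, and \cite[Lemma 2.5]{Koes10a} for the three parts respectively. Your preliminary lemma (faithfulness of the trace forces the map on $*$-polynomials to be well defined, then extend normally) is exactly the mechanism underlying those cited results, and your verification of \eqref{eq:PR}--\eqref{eq:L} in both directions, including the case analysis for $\rho(\sigma_i)x_n$ using \eqref{eq:B1}, \eqref{eq:B2}, \eqref{eq:S}, is the expected argument carried out in full.
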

The proof is not difficult. If required details may be found for (i)
in \cite[Theorem 1.9]{GoKo09a}, for (ii) in \cite[Lemma 8.6]{Koes10a}
and for (iii) in \cite[Lemma 2.5]{Koes10a}. Note that the partial shift 
$\alpha_0$ is the shift $\alpha$ in the stationary setting. 

The equivalent characterization in (i) provides us with a \emph{constructive procedure} to obtain exchangeable sequences. Suppose we have a tracial probability space $(\cA,\trace)$ which is equipped with a representation of the infinite symmetric group, $\rho \colon \Sset_\infty \to \Aut{\cA,\trace}$. Now choose an element $x_0 \in \bigcap_{n \ge 2}\cA^{\rho(\sigma_n)}$, hence $x_0$ satisfies \eqref{eq:L}. Then an exchangeable sequence $(x_n)_{n \ge 0}$ can be constructed by using \eqref{eq:PR}. 

\begin{Remark}\normalfont
This constructive procedure may yield an exchangeable sequence which does not generate $\cA$ (this is evident from the choice $x_0= \1$). But we can always return to a minimal setting of the tracial probability space, by restricting the trace $\trace$ to $\vN\{x_0,x_1, \ldots\}$. Note that the sequence $(x_n)_{n \ge 0}$ is also exchangeable after this restriction and thus also the representation $\rho$ restricts to $\vN\{x_0,x_1, \ldots\}$. Thus \emph{any} exchangeable sequence
can be obtained from the constructive procedure.
\end{Remark}    
Presently no constructive procedure is known to produce \emph{all} spreadable sequences.
We will see below that the constructive procedure for exchangeable sequences extends to a `braided' setting where the role of $\Sset_\infty$ is taken by the infinite braid group $\Bset_\infty$. In particular this provides us with an interesting class of spreadable sequences which may not be exchangeable.
Next we motivate this `braided' extension by providing an alternative proof of the
simple fact that exchangeability implies spreadability, based on the equivalent
characterizations in Proposition \ref{prop:ds}.  
\begin{Lemma}\label{lem:ex2sp}
Suppose $(\cA, \trace)$ is equipped with the representation $\rho\colon \Sset_\infty \to \Aut{\cA,\trace}$. Let $x_0 \in \bigcap_{n \ge 2} \cA^{\rho(\sigma_n)}$ and $(x_n)_{n \ge 0}$ the exchangeable sequence  obtained from the constructive procedure. Then, for each $N \in \Nset$, the map
\[
x_n \mapsto \alpha_N(x_n):= \rho(\sigma_{N+1}\sigma_{N+2} \cdots \sigma_{N+k})(x_n),  
\] 
with $k$ sufficiently large, satisfies \eqref{eq:partialshift} and extends to an 
endomorphism in $(\widetilde{\cA}, \widetilde{\trace})$, where $\widetilde{\cA}= \vN(x_0, x_1, x_2, \ldots)$ and $\widetilde{\trace}= \trace|_{\widetilde{\cA}}$.
\end{Lemma}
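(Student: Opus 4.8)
The plan is to verify \eqref{eq:partialshift} directly from the product representation \eqref{eq:PR} and the fixed-point property \eqref{eq:L}, and then to observe that a map defined on generators by an automorphism of $\cA$ automatically restricts to a $\trace$-preserving endomorphism of the generated von Neumann subalgebra. First I would fix $N$ and, for each $n$, choose $k$ large enough that the reindexing below only involves generators $\sigma_j$ with $j > $ all indices appearing in $x_n$; since $x_n = \rho(\sigma_n\sigma_{n-1}\cdots\sigma_1)x_0$ uses $\sigma_1,\ldots,\sigma_n$, it suffices to take $N+1 > n$, i.e. $N \ge n$, for the "easy" case, and any $k \ge 1$ will do once we commute things past. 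The key computation is: for $n < N$, the word $\sigma_{N+1}\cdots\sigma_{N+k}$ commutes with $\sigma_n\cdots\sigma_1$ by \eqref{eq:B2} (all indices differ by more than $1$), so $\alpha_N(x_n) = \rho(\sigma_{N+1}\cdots\sigma_{N+k})\rho(\sigma_n\cdots\sigma_1)x_0 = \rho(\sigma_n\cdots\sigma_1)\rho(\sigma_{N+1}\cdots\sigma_{N+k})x_0 = \rho(\sigma_n\cdots\sigma_1)x_0 = x_n$, where the middle-to-last step uses that $x_0$ is fixed by each $\rho(\sigma_j)$, $j \ge 2$, via \eqref{eq:L}. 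For $n \ge N$ one must instead show $\rho(\sigma_{N+1}\cdots\sigma_{N+k})(\sigma_n\cdots\sigma_1)x_0$ equals $\rho(\sigma_{n+1}\sigma_n\cdots\sigma_1)x_0 = x_{n+1}$; here one uses the braid relations \eqref{eq:B1}, \eqref{eq:B2} to rewrite, within the group $\Bset_\infty$ (or $\Sset_\infty$), the product $(\sigma_{N+1}\cdots\sigma_{N+k})(\sigma_n\cdots\sigma_1)$, pushing the block $\sigma_{N+1}\cdots\sigma_{N+k}$ to the right past $\sigma_n\cdots\sigma_{N+2}$ (which commute with it) and then handling the interaction with $\sigma_{N+1},\sigma_N,\ldots$ so that the net effect on $\sigma_n\cdots\sigma_1$ is to shift each index $\ge N$ up by one — after which the leftover tail block, now consisting of generators with indices $> n+1$, can be absorbed using \eqref{eq:L} again once it is moved to act first on $x_0$.

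The cleanest way to organize the $n \ge N$ case is to recall the standard identity in braid/symmetric groups that $(\sigma_{N+1}\sigma_{N+2}\cdots\sigma_{m})(\sigma_j\sigma_{j-1}\cdots\sigma_1) = (\sigma_{j+1}\sigma_j\cdots\sigma_1)(\sigma_{N+1}\sigma_{N+2}\cdots\sigma_{m})$ whenever $N \le j \le m-1$ (this is exactly the relation underlying the partial-shift endomorphism of the braid group itself, and it follows by a short induction on the length of the block from \eqref{eq:B1} and \eqref{eq:B2}). Applying $\rho$ and acting on $x_0$, the rightmost block $\rho(\sigma_{N+1}\cdots\sigma_m)$ acts on $x_0$ first; choosing $k = m - N$ large enough that $m - N \ge $ (length needed) but more importantly so that — hmm, here one has to be slightly careful: after the rewrite the trailing block has indices $N+1,\ldots,m$, and for it to fix $x_0$ one needs $N+1 \ge 2$, i.e. $N \ge 1$, which is assumed ($N \in \Nset$). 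For $N = 0$ one argues separately, or notes that $\alpha_0$ is just the stationary shift $\alpha$ of Proposition \ref{prop:ds}(iii) and the identity $\rho(\sigma_1\cdots\sigma_m)(\sigma_j\cdots\sigma_1)x_0$ still produces $\sigma_{j+1}\cdots\sigma_1 x_0$ because the remaining $\rho(\sigma_1)$ acting first on $x_0$ does fix it when... — actually this boundary case does need the convention $\rho(\sigma_1)x_0 = x_1$, not $x_0$, so one should restrict to $N \ge 1$ in the induction and treat the shift case via stationarity directly. I would phrase the statement and proof for $N \in \Nset$ (i.e. $N \ge 1$), consistent with how \eqref{eq:partialshift} will be used.

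Once \eqref{eq:partialshift} is established on the generators, the extension to an endomorphism is the routine part: $\alpha_N$ is by construction the restriction of the automorphism $\rho(\sigma_{N+1}\cdots\sigma_{N+k})$ of $(\cA,\trace)$, hence it is $\trace$-preserving and $*$-homomorphic wherever it is defined; its image $\rho(\sigma_{N+1}\cdots\sigma_{N+k})(\widetilde{\cA})$ is the von Neumann algebra generated by $\{\alpha_N(x_n)\}_{n\ge0} = \{x_n\}_{n\ge0}$ (using \eqref{eq:partialshift}, the image set is a reindexing of the original generating set, missing only $x_N$), which is contained in $\widetilde{\cA}$; therefore $\alpha_N$ maps $\widetilde{\cA}$ into $\widetilde{\cA}$ and the corestriction is the desired element of $\End{\widetilde{\cA},\widetilde{\trace}}$. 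One minor point worth a sentence: the definition of $\alpha_N(x_n)$ must be independent of the choice of $k$, which follows because increasing $k$ by one appends $\rho(\sigma_{N+k+1})$, whose generator has index exceeding all those occurring in the already-rewritten expression, so by \eqref{eq:B2} it commutes through and acts on $x_0$ first, where \eqref{eq:L} shows it acts trivially. The main obstacle is purely bookkeeping: making the braid-group reindexing identity precise and confirming the index ranges so that every "leftover" generator that ends up acting first on $x_0$ genuinely has index $\ge 2$; there is no conceptual difficulty, only the need to choose $k$ uniformly large (e.g. $k > n$) and to invoke \eqref{eq:B1}, \eqref{eq:B2}, \eqref{eq:L} in the right order.
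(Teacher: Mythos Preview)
Your approach is essentially the paper's: treat $n<N$ by commuting via \eqref{eq:B2} and absorbing with \eqref{eq:L}, treat $n\ge N$ via a braid intertwining that pushes the block past $\sigma_{n+1}\cdots\sigma_1$, then extend by noting $\alpha_N$ is the restriction of a $\trace$-preserving automorphism of $\cA$. One correction, however: the identity you quote is false as stated --- simply count generators on each side, or test $N=j=1$, $m=2$, which would give $\sigma_2\sigma_1=\sigma_2\sigma_1\sigma_2$. The correct version (and the one the paper uses, with $k=n+1-N$) is obtained from the single-step intertwining
\[
\sigma_l(\sigma_{n+1}\sigma_n\cdots\sigma_1)=(\sigma_{n+1}\sigma_n\cdots\sigma_1)\sigma_{l+1}\qquad(N+1\le l\le n),
\]
whose repeated application yields
\[
(\sigma_{N+1}\cdots\sigma_{n}\sigma_{n+1})(\sigma_n\cdots\sigma_1)=(\sigma_{n+1}\sigma_n\cdots\sigma_1)(\sigma_{N+2}\cdots\sigma_{n+1}),
\]
i.e.\ the trailing block is \emph{shifted up by one}, not left unchanged. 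This does not damage your conclusion --- the shifted trailing block still has all indices $\ge N+2\ge 2$ and hence fixes $x_0$ by \eqref{eq:L} --- and in fact it dissolves your worry about the boundary case, since $N+2\ge 2$ holds already for $N\ge 0$. With this fix the argument goes through exactly as in the paper.
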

\begin{proof}
By exchangeability we have $x_n= \rho(\sigma_{n} \sigma_{n-1} \cdots \sigma_1) (x_0)$ for 
$n \ge 1$. Let us start with the case $n <N$. Since $N+1-n>1$,
\begin{eqnarray*}
\alpha_N(x_n)&=&  \rho\Big(
(\sigma_{N+1}\sigma_{N+2} \cdots \sigma_{N+k})
(\sigma_{n} \sigma_{n-1} \cdots \sigma_1)\Big) 
(x_0)\\
&\stackrel{\eqref{eq:B2}}{=}& 
\rho\Big((\sigma_{n} \sigma_{n-1} \cdots \sigma_1)
(\sigma_{N+1}\sigma_{N+2} \cdots \sigma_{N+k})\Big)
(x_0)\\
&\stackrel{\eqref{eq:L}}{=}& x_n.
\end{eqnarray*} 
Now consider the case $n\ge N$. Due to the previous arguments it
suffices to consider $k = n+1-N$. Thus we have 
\begin{eqnarray} \label{eqn:exch2spread}
\alpha_N(x_n)&=&  \rho\Big(
(\sigma_{N+1}\sigma_{N+2} \cdots \sigma_{n}\sigma_{n+1})
(\sigma_{n} \sigma_{n-1} \cdots \sigma_1)\Big) 
(x_0).
\end{eqnarray}   
The braid relations \eqref{eq:B1} and \eqref{eq:B2} supply us with the
intertwining property
\[
\sigma_{l} (\sigma_{n+1}\sigma_{n}\cdots \sigma_{1})
=  (\sigma_{n+1}\sigma_{n}\cdots \sigma_{1}) \sigma_{l+1} 
\]
for $N \le l \le n$. Its repeated application in \eqref{eqn:exch2spread} moves $\sigma_{N+1} \sigma_{N+2} \cdots \sigma_n$ to the right hand side of $\sigma_{n+1}(\sigma_n \sigma_{n-1}\cdots \sigma_1)$ such that each factor vanishes due to \eqref{eq:L}. Altogether we have shown that
\begin{eqnarray*}
\alpha_{N}(x_n) = 
\begin{cases}
x_n & \text{ for $n < N$}\\
x_{n+1} & \text{ for $n \ge N$}.
\end{cases} 
\end{eqnarray*} 
The monomials $x_{i_1} \cdots x_{i_m}$ form a weak*-total set in $\widetilde{\cA}$. Since $\alpha_N$ is implemented on $x_{i_1} \cdots x_{i_m}$ by an automorphism of $\cA$ satisfying
$\trace \circ \alpha_N = \trace$, the $\Cset$-linear multiplicative extension of $\alpha_N$ defines an endomorphism in $\End{\widetilde{\cA},\widetilde{\trace}}$.  \end{proof}   

A review of above proof shows that all algebraic arguments rely on the braid relations \eqref{eq:B1}, \eqref{eq:B2}, the localization \eqref{eq:L} and the
product representation \eqref{eq:PR}. We did not use the relations
\eqref{eq:S} of the infinite symmetric group. This motivates the following new
symmetry for sequences in noncommutative probability spaces.

\begin{Definition} \normalfont
Let $(\cA,\trace)$ be a tracial probability space. The random variables $(x_n)_{n \ge 0} \subset \cA$ are said to be \emph{braidable} if there exists a representation $\rho\colon \Bset_\infty \to \Aut{\cA,\trace}$ such that 
\begin{align}
x_n &= \rho(\sigma_n \sigma_{n-1} \cdots \sigma_1) (x_0)  & (n \ge 1),& \notag\\
x_0 &= \rho(\sigma_n)(x_0)  &(n \ge 2)&. 
\end{align}
Here $\sigma_i$ denotes the Artin generator braiding the $(i-1)$-th and
$i$-th strand. 
\end{Definition}
As in the case of exchangeability, the constructive procedure applies
again to obtain braidable sequences. So given the braid group 
representation $\rho \colon \Bset_\infty \to \Aut{\cA,\trace}$, choose
an element $x_0 \in \bigcap_{n \ge 2} \cA^{\rho(\sigma_n)}$ and use 
\eqref{eq:PR} to obtain all $x_n$'s. Most importantly, replacing 
$\Sset_\infty$ by $\Bset_\infty$, the proof of Lemma \ref{lem:ex2sp} 
directly transfers to the `braided' setting:

\begin{Theorem}\label{thm:braidable} 
A braidable sequence $(x_n)_{n \ge 0}$ is spreadable.
\end{Theorem}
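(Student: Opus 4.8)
The plan is to imitate the proof of Lemma~\ref{lem:ex2sp} verbatim, observing that every algebraic manipulation there used only the braid relations \eqref{eq:B1}, \eqref{eq:B2}, the localization \eqref{eq:L}, and the product representation \eqref{eq:PR}, none of which invoked the involutive relation \eqref{eq:S}. Concretely, given a braidable sequence $(x_n)_{n \ge 0}$ with defining representation $\rho \colon \Bset_\infty \to \Aut{\cA,\trace}$, I would first pass to the minimal setting by replacing $\cA$ with $\widetilde{\cA} = \vN(x_0, x_1, x_2, \ldots)$ and $\trace$ with $\widetilde{\trace} = \trace|_{\widetilde{\cA}}$; since each $x_n$ lies in $\widetilde{\cA}$ by construction and each $\rho(\sigma_i)$ permutes the generating set in the sense dictated by \eqref{eq:PR}, the representation $\rho$ restricts to $\widetilde{\cA}$. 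By Proposition~\ref{prop:ds}(2) it then suffices to produce partial shifts $(\alpha_N)_{N \ge 0} \subset \End{\widetilde{\cA},\widetilde{\trace}}$ satisfying \eqref{eq:partialshift}.

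For each $N \in \Nset$ I would define $\alpha_N$ on the random variables by
\[
\alpha_N(x_n) := \rho(\sigma_{N+1}\sigma_{N+2} \cdots \sigma_{N+k})(x_n)
\]
for $k$ sufficiently large, exactly as in Lemma~\ref{lem:ex2sp}. The verification of \eqref{eq:partialshift} splits into the two cases $n < N$ and $n \ge N$. For $n < N$ one writes $x_n = \rho(\sigma_n \sigma_{n-1} \cdots \sigma_1)(x_0)$, uses \eqref{eq:B2} to commute the block $\sigma_{N+1}\cdots\sigma_{N+k}$ past $\sigma_n\cdots\sigma_1$ (the indices are separated since $N+1-n > 1$), and then annihilates the block against $x_0$ via \eqref{eq:L}, giving $\alpha_N(x_n) = x_n$. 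For $n \ge N$ one takes $k = n+1-N$ and applies repeatedly the intertwining identity $\sigma_l(\sigma_{n+1}\sigma_n\cdots\sigma_1) = (\sigma_{n+1}\sigma_n\cdots\sigma_1)\sigma_{l+1}$ for $N \le l \le n$, which follows from \eqref{eq:B1} and \eqref{eq:B2}; this sweeps the factors $\sigma_{N+1},\ldots,\sigma_n$ to the right of $\sigma_{n+1}(\sigma_n\cdots\sigma_1)$ where they kill $x_0$ by \eqref{eq:L}, leaving $\alpha_N(x_n) = \rho(\sigma_{n+1}\sigma_n\cdots\sigma_1)(x_0) = x_{n+1}$. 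One should also check, as in the lemma, that the value is independent of the choice of $k$ once $k$ is large enough, which is again just \eqref{eq:B2} plus \eqref{eq:L}.

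Finally, to see that $\alpha_N$ extends from the generators to a genuine $\widetilde{\trace}$-preserving endomorphism of $\widetilde{\cA}$: the monomials $x_{i_1}\cdots x_{i_m}$ form a weak*-total subset of $\widetilde{\cA}$, and on any such monomial $\alpha_N$ is implemented by the automorphism $\rho(\sigma_{N+1}\cdots\sigma_{N+k})$ of $\cA$ (for $k$ large enough to cover all indices appearing), which preserves $\trace$; hence the $\Cset$-linear multiplicative extension is well-defined, multiplicative, and trace-preserving, so it lies in $\End{\widetilde{\cA},\widetilde{\trace}}$. The only point requiring a little care — and the one I would flag as the main (mild) obstacle — is making precise that "$k$ sufficiently large" can be chosen uniformly for a given finite monomial and that the resulting map is consistent across monomials; this is handled exactly as in the proof of Lemma~\ref{lem:ex2sp}, so no new idea is needed. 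Everything else is a direct transcription of that proof with $\Sset_\infty$ replaced by $\Bset_\infty$.
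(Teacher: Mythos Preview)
Your argument is correct and is exactly the paper's approach: the paper's proof consists of the single sentence that the proof of Lemma~\ref{lem:ex2sp} transfers verbatim when $\Sset_\infty$ is replaced by $\Bset_\infty$, and you have spelled this out in detail.

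One inaccuracy worth flagging: your assertion that ``each $\rho(\sigma_i)$ permutes the generating set'' and hence that $\rho$ restricts to $\widetilde{\cA}$ is not justified and is in general false in the braid setting---for instance $\rho(\sigma_1)(x_1) = \rho(\sigma_1^2)(x_0)$ has no reason to lie in $\widetilde{\cA}$ absent the relation \eqref{eq:S}. The paper explicitly warns about this in the Remark immediately following Theorem~\ref{thm:braidable}. Fortunately your proof never actually uses this claim: what you need, and what you do verify, is only that the particular products $\rho(\sigma_{N+1}\cdots\sigma_{N+k})$ for $k$ large map each $x_n$ to some $x_m$, so that $\alpha_N$ (not $\rho$ itself) is well defined on $\widetilde{\cA}$. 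Simply delete the sentence about $\rho$ restricting and the proof stands.
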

It is easy to see that exchangeability implies braidability. Thus we 
can insert `braidability' in the hierarchy of distributional symmetries
of Lemma \ref{lem:ds} between `exchangeability' and `spreadability'.

\begin{Remark} \normalfont
We should warn the reader that, in contrast to exchangeability, 
a braidable sequence $(x_n)_{n \ge 0}$ may go along with a braid group representation
$\rho$ which does not restrict to  $\vN(x_0, x_1, x_2, \ldots)$
in the non-minimal case.  
\end{Remark} 
As a by-product of the constructive procedure we obtain the following
fixed point characterizations for braidable sequences. We remind that 
the tail algebra of a sequence $(x_n)_{n \ge 0} \subset \cA$
is given by 
\[
\cA^\cT := \bigcap_{n \ge 0 } \vN(x_n, x_{n+1}, x_{n+2}, \ldots).
\]
\begin{Theorem} \label{thm:fix}
Let $(\cA,\trace)$ be a tracial probability space and suppose that
the braidable sequence $(x_n)_{n \ge 0}$ generates the von Neumann algebra $\cA$. 
Then we have 
\[
\cA^\cT = \cA^{\rho(\Bset_\infty)}= \cA^{\alpha}. 
\]
Here $\cA^\cT$ is the tail algebra of $(x_n)_{n \ge 0}$ and 
$\cA^{\rho(\Bset_\infty)}$ is the fixed point algebra of the 
representation $\rho$ which implements braidability. Finally, $\cA^\alpha$ 
is the fixed point algebra of the shift $\alpha$ satisfying 
$\alpha(x_n) = x_{n+1}$ for all $n \in \Nset_0$. 
\end{Theorem}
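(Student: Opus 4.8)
The plan is to prove both equalities at once by closing a cycle of three inclusions,
\[
\cA^{\alpha} \subseteq \cA^{\cT} \subseteq \cA^{\rho(\Bset_\infty)} \subseteq \cA^{\alpha}.
\]
Throughout I write $\cA_{[N} := \vN(x_N, x_{N+1}, \ldots)$ and let $E_N$ be the $\trace$-preserving conditional expectation onto $\cA_{[N}$, so that $\cA^{\cT} = \bigcap_{N \ge 0} \cA_{[N}$ and, since $\alpha(x_n) = x_{n+1}$ and $\alpha$ is a normal $\ast$-homomorphism, $\alpha^N(\cA) = \cA_{[N}$ for every $N$. By Theorem \ref{thm:braidable} the braidable sequence is spreadable, so the shift $\alpha = \alpha_0$ of Proposition \ref{prop:ds} exists; moreover, transferring the construction of the partial shifts in Lemma \ref{lem:ex2sp} to the braid group (as noted before Theorem \ref{thm:braidable}) gives the \emph{local identity}
\[
\alpha(x_n) = \rho(\sigma_1 \sigma_2 \cdots \sigma_k)(x_n) \qquad (k \ge n+1),
\]
so that $\alpha$ agrees with the automorphism $\rho(\sigma_1 \cdots \sigma_k)$ on $\vN(x_0, \ldots, x_{k-1})$.

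The first inclusion is immediate: if $\alpha(a) = a$ then $a = \alpha^N(a) \in \alpha^N(\cA) = \cA_{[N}$ for every $N$, hence $a \in \cA^{\cT}$. For the second inclusion I would run the braid computation of Lemma \ref{lem:ex2sp} once more: from \eqref{eq:B1} and \eqref{eq:B2} one obtains the intertwining identity $\sigma_i(\sigma_n \sigma_{n-1} \cdots \sigma_1) = (\sigma_n \sigma_{n-1} \cdots \sigma_1)\sigma_{i+1}$ for all $n \ge i+1$ (the very relation already exploited in that proof), whence by \eqref{eq:PR} and \eqref{eq:L}
\[
\rho(\sigma_i)(x_n) = \rho(\sigma_n \cdots \sigma_1)\rho(\sigma_{i+1})(x_0) = \rho(\sigma_n \cdots \sigma_1)(x_0) = x_n \qquad (n \ge i+1),
\]
using that $i+1 \ge 2$. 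Thus $\rho(\sigma_i)$ fixes $\cA_{[i+1}$ pointwise, so every $a \in \cA^{\cT} \subseteq \cA_{[i+1}$ satisfies $\rho(\sigma_i)(a) = a$; as $i \ge 1$ is arbitrary and the $\sigma_i$ generate $\Bset_\infty$, we get $a \in \cA^{\rho(\Bset_\infty)}$.

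The heart of the proof is the last inclusion $\cA^{\rho(\Bset_\infty)} \subseteq \cA^{\alpha}$, which I would establish in two steps. First, for $a \in \cA^{\rho(\Bset_\infty)}$ I claim $\alpha(a) = E_1(a)$. By the local identity, $\rho(\sigma_1 \cdots \sigma_k)$ is a $\trace$-preserving automorphism of $\cA$ carrying $\vN(x_0, \ldots, x_{k-1})$ onto $\vN(x_1, \ldots, x_k)$, hence it intertwines the conditional expectations onto those two subalgebras; evaluating this intertwining relation at $a$, using the local identity on the left and $\rho(\sigma_1 \cdots \sigma_k)(a) = a$ on the right, yields
\[
\alpha\bigl(E_{\vN(x_0, \ldots, x_{k-1})}(a)\bigr) = E_{\vN(x_1, \ldots, x_k)}(a).
\]
Letting $k \to \infty$ and invoking the martingale convergence theorem for the increasing families $\vN(x_0, \ldots, x_{k-1})$ (with weak$^*$-dense union $\cA$) and $\vN(x_1, \ldots, x_k)$ (with weak$^*$-dense union $\cA_{[1}$), together with the fact that $\alpha$ extends to an $L^2(\cA, \trace)$-isometry, gives $\alpha(a) = E_1(a)$. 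Second, since $\alpha$ is $\trace$-preserving, $\|a\|_2 = \|\alpha(a)\|_2 = \|E_1(a)\|_2$; but $E_1$ acts on $L^2(\cA, \trace)$ as the orthogonal projection onto the closure of $\cA_{[1}$, so $\|E_1(a)\|_2 = \|a\|_2$ forces $a \in \cA_{[1}$, hence $E_1(a) = a$, hence $\alpha(a) = a$. This closes the cycle and proves $\cA^{\cT} = \cA^{\rho(\Bset_\infty)} = \cA^{\alpha}$.

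The step I expect to be the main obstacle is the identity $\alpha(a) = E_1(a)$ for $\rho$-fixed $a$: one must verify that the merely \emph{local} agreement of $\alpha$ with the automorphisms $\rho(\sigma_1 \cdots \sigma_k)$ is genuinely compatible with the conditional expectations, and the passage $k \to \infty$ relies on normality together with the (noncommutative) martingale convergence theorem, which should be cited explicitly. Once that identity is in hand the finishing $L^2$-norm rigidity argument is very short, and the remaining two inclusions are essentially bookkeeping with braid relations that have already appeared in Lemma \ref{lem:ex2sp}.
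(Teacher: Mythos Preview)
Your proof is correct and follows the same three-inclusion cycle as the paper; the arguments for $\cA^{\alpha}\subseteq\cA^{\cT}$ and $\cA^{\cT}\subseteq\cA^{\rho(\Bset_\infty)}$ coincide with the paper's. The only substantive difference is in the step $\cA^{\rho(\Bset_\infty)}\subseteq\cA^{\alpha}$, where you take a detour through the identity $\alpha(a)=E_1(a)$ (via intertwining of conditional expectations, martingale convergence, and an $L^2$-rigidity argument). The paper dispatches this inclusion more directly: given $a\in\cA^{\rho(\Bset_\infty)}$, approximate $a$ in $\|\cdot\|_2$ by polynomials $p$ in $x_0,\ldots,x_{s}$; since $\alpha$ agrees with the $\trace$-preserving automorphism $\rho(\sigma_1\cdots\sigma_{s+1})$ on such $p$ and the latter fixes $a$, one has
\[
\|\alpha(p)-a\|_2=\|\rho(\sigma_1\cdots\sigma_{s+1})(p)-\rho(\sigma_1\cdots\sigma_{s+1})(a)\|_2=\|p-a\|_2,
\]
whence $\alpha(a)=a$. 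So the step you flagged as the ``main obstacle'' is in fact a one-line approximation argument, and your conditional-expectation machinery, while correct, is not needed.
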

\begin{proof}
We will show that $\cA^{\cT} \subset \cA^{\rho(\Bset_\infty)} \subset \cA^{\alpha} \subset \cA^{\cT}$. 

We start with the first inclusion. Let $x \in \cA^\cT$ be fixed. It suffices to 
show that $\rho(\sigma_k)(x)= x$ for any $k \in \Nset$. After choosing the number $k$ we approximate $x \in \cA^\cT$ by elements of 
\[
\bigcup_{s \ge 0}\Cset\langle x_n, x_n^*, x_{n+1}, x_{n+1}^*, \ldots, x_{n+s}, x_{n+s}^*\rangle
\] 
for any fixed $n$. For $k < i$,
\begin{eqnarray*}
\rho(\sigma_k)(x_i) 
&=& \rho\big(\sigma_k (\sigma_i \cdots \sigma_1 )\big)(x_0)\\
&=& \rho\big( (\sigma_i \cdots \sigma_1 )\sigma_{k+1}\big)(x_0)\\
&=& \rho\big( \sigma_i \cdots \sigma_1 \big)(x_0)\\
&=& x_i
\end{eqnarray*}
from \eqref{eq:B1} and \eqref{eq:B2}. This entails $\cA^\cT \subset \cA^\rho(\Bset_\infty)$.   

The second inclusion $\cA^{\rho(\Bset_\infty)} \subset \cA^{\alpha}$ follows from
$\alpha(x_n) = \rho(\sigma_1 \sigma_2 \ldots \sigma_\ell)(x_n)$ (for $\ell$ sufficiently large, compare Lemma \ref{lem:ex2sp}) and a standard argument on the approximation
of $x\in \cA$ by elements of the *-algebra $\bigcup_{s \ge 0}\Cset\langle x_0, x_0^*, x_1, x_1^*, \ldots, x_s, x_s^*\rangle$, which is weak*-dense in $\cA$.     
  
The last inclusion is immediate from 
\[
\cA^{\alpha} \subset \bigcap_{n \ge 0}\alpha^n(\cA)
= \bigcap_{n \ge 0} \alpha^n \vN(x_0, x_1, \ldots)
= \bigcap_{n \ge 0} \vN(x_n, x_{n+1}, \ldots) = \cA^{\cT},
\] 
where we have used again that $\cA$ is generated by the $x_i$'s.  
\end{proof}
Roughly speaking, the previous theorem allows us to upgrade joint *-distributions
to operator-valued joint *-distributions, by replacing the trace $\trace$ by 
the conditional expectation $E_\cT$ onto the tail algebra of a stationary 
sequence $(x_n)_{n \ge 0}$ which generates $\cA$. To be more precise, let $(\cA, \trace)$ 
be a tracial probability space and $(x_n)_{n \ge 0}, (y_n)_{n \ge 0} \subset \cA$ two stationary 
sequences which have the same tail algebra $\cA^\cT$ and each of them generates $\cA$. 
Then we write
\[
(x_0, x_1, x_2, \ldots) \stackrel{E_\cT}{=}  (y_0, y_1, y_2, \ldots),
\]
if, for every $n \in \Nset$,  
\begin{align}\label{eq:def-ds-tail}
E_\cT(x_{\ii(1)}^{\epsilon_1} x_{\ii(2)}^{\epsilon_2} \cdots x_{\ii(n)}^{\epsilon_n})
=E_\cT(y_{\ii(1)}^{\epsilon_1} y_{\ii(2)}^{\epsilon_2} \cdots y_{\ii(n)}^{\epsilon_n})
\end{align}   
for all $\ii \colon \{1,2,\ldots, n\} \to \Nset_0$
and $\epsilon_1, \epsilon_2, \ldots \epsilon_n \in \{1, *\}$.

Using Theorem \ref{thm:fix} it follows from standard arguments as in \cite[Lemma 7.6]{Koes10a} the following `lifted' version of distributional symmetries.
\begin{Corollary}\label{cor:tail-spread}
Let $(\cA,\trace)$ be a tracial probability space and suppose the 
stationary sequence $(x_n)_{n \ge 0}$ generates $\cA$.
\begin{enumerate}
\item 
If $(x_n)_{n \ge 0}$ is exchangeable then 
$
(x_0, x_1, x_2, \ldots) \stackrel{E_\cT}{=}  (x_{\pi(0)}, x_{\pi(1)}, x_{\pi(2)}, \ldots),
$
for all $\pi \in \Sset_\infty$.
\item
If $(x_n)_{n \ge 0}$ is spreadable then 
$
(x_0, x_1, x_2, \ldots) \stackrel{E_\cT}{=}  (x_{n_0}, x_{n_1}, x_{n_2}, \ldots),
$
for all (increasing) subsequences $(n_0, n_1, n_2, \ldots)$ of $(0,1,2,\ldots)$.
\end{enumerate}
\end{Corollary}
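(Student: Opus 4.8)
The plan is to reduce the two statements of the Corollary to the already-established fixed-point characterization in Theorem \ref{thm:fix}, following the template of the trace-level arguments in Proposition \ref{prop:ds} and Lemma \ref{lem:ex2sp}. The key structural observation is that $E_\cT$ is the $\trace$-preserving conditional expectation onto $\cA^\cT = \cA^{\alpha}$, and hence for any $\beta \in \End{\cA,\trace}$ (or $\Aut{\cA,\trace}$) that \emph{fixes $\cA^\cT$ pointwise} one has the covariance relation $E_\cT \circ \beta = E_\cT$. Indeed, by uniqueness of the $\trace$-preserving conditional expectation onto $\cA^\cT$, it suffices to check that $E_\cT\circ\beta$ maps into $\cA^\cT$, is $\trace$-preserving, and restricts to the identity on $\cA^\cT$ — and the last of these is exactly the hypothesis that $\beta$ fixes $\cA^\cT$ pointwise. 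This is the single ingredient that upgrades a trace identity to an $E_\cT$-identity.

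For part (ii), I would argue as follows. Since $(x_n)_{n\ge0}$ is spreadable and generates $\cA$, Proposition \ref{prop:ds}(ii) gives partial shifts $\alpha_N \in \End{\cA,\trace}$ with $\alpha_N(x_n)=x_n$ for $n<N$ and $\alpha_N(x_n)=x_{n+1}$ for $n\ge N$. A general spreading $(n_0<n_1<n_2<\cdots)$ of $(0,1,2,\ldots)$ is realized, on any fixed initial segment of indices, by a finite composition of such partial shifts; call this composition $\Phi$, so that $\Phi(x_k) = x_{n_k}$ for all $k$ up to the relevant bound. Because each $\alpha_N$ restricts to the identity on the tail algebra $\cA^\cT$ — this is where Theorem \ref{thm:fix} enters, via $\cA^\cT = \cA^\alpha = \bigcap_N \alpha_N^{?}$, more directly since $\alpha_N$ fixes all $x_n$ with $n\ge N$ it fixes $\vN(x_N,x_{N+1},\ldots) \supset \cA^\cT$ — the composition $\Phi$ also fixes $\cA^\cT$ pointwise, hence $E_\cT\circ\Phi = E_\cT$. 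Applying this to a $*$-monomial in the $x_i$'s of bounded index and using $\Phi(x_{\ii(j)}) = x_{n_{\ii(j)}}$ yields $E_\cT(x_{\ii(1)}^{\epsilon_1}\cdots x_{\ii(n)}^{\epsilon_n}) = E_\cT(x_{n_{\ii(1)}}^{\epsilon_1}\cdots x_{n_{\ii(n)}}^{\epsilon_n})$, which is \eqref{eq:def-ds-tail} with $(y_n)=(x_{n_n})$. Part (i) is then immediate: by Lemma \ref{lem:ds} an exchangeable sequence is spreadable, and any permutation $\pi\in\Sset_\infty$ acts on a fixed finite index set as a bijection that can be interpolated between two spreadings, so the $E_\cT$-identity for permutations follows from that for spreadings by the standard two-step comparison (spread up to make room, apply $\rho(\pi')$, spread back), exactly as in the classical reduction; alternatively one invokes Proposition \ref{prop:ds}(i) directly, noting $\rho(\Sset_\infty)$ fixes $\cA^{\rho(\Sset_\infty)} = \cA^\cT$ pointwise by Theorem \ref{thm:fix}, so $E_\cT\circ\rho(\pi)=E_\cT$ for all $\pi$.

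The step I expect to be the main obstacle — or at least the one requiring the most care — is the passage from the algebraic identity on $*$-monomials to the norm/weak$^*$-closed setting, i.e. making precise that the identity $E_\cT\circ\Phi = E_\cT$ (a priori only verified by the uniqueness argument) genuinely applies to all of $\cA$ and in particular that $\Phi$ is well-defined as an endomorphism of the minimal $\cA = \vN(x_0,x_1,\ldots)$ and is normal. This is handled by exactly the density argument already used at the end of Lemma \ref{lem:ex2sp}: the $*$-monomials $x_{i_1}^{\epsilon_1}\cdots x_{i_m}^{\epsilon_m}$ are weak$^*$-total in $\cA$, each $\alpha_N$ is normal and $\trace$-preserving, hence so is $\Phi$, and both $E_\cT\circ\Phi$ and $E_\cT$ are normal maps agreeing on this total set. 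A secondary point of care is verifying that a single composition $\Phi$ of partial shifts can simultaneously realize the spreading on \emph{all} indices appearing in a given monomial (one chooses the shifts indexed by large enough parameters, with the number of factors as in Lemma \ref{lem:ex2sp}); since the monomial involves only finitely many $x_i$'s this is routine, and it is precisely the finiteness that the reference \cite[Lemma 7.6]{Koes10a} exploits. Hence the corollary follows from Theorem \ref{thm:fix} together with the covariance property of $E_\cT$ under tail-fixing endomorphisms.
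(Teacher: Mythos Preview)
Your overall strategy---show that the relevant endomorphisms fix $\cA^\cT$ pointwise and then deduce $E_\cT\circ\beta=E_\cT$---is precisely what the paper intends (it gives no detailed argument, only the reference to Theorem~\ref{thm:fix} together with \cite[Lemma~7.6]{Koes10a}). Your justification of the covariance identity via ``uniqueness of the conditional expectation'' is slightly loose: the three properties you list do not by themselves characterize a conditional expectation. Either add that $E_\cT\circ\beta$ is idempotent with range $\cA^\cT$ (this follows because $\beta|_{\cA^\cT}=\id$), or argue directly via $\trace(E_\cT(\beta(a))\,b)=\trace(\beta(a)\beta(b))=\trace(ab)=\trace(E_\cT(a)\,b)$ for all $b\in\cA^\cT$ and use faithfulness of $\trace$.

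There is, however, a genuine slip in part~(ii). You assert that ``$\alpha_N$ fixes all $x_n$ with $n\ge N$, so it fixes $\vN(x_N,x_{N+1},\ldots)\supset\cA^\cT$''. This is backwards: by \eqref{eq:partialshift} one has $\alpha_N(x_n)=x_{n+1}$ for $n\ge N$, and $\alpha_N$ fixes $x_n$ only for $n<N$. So your direct argument collapses. The correct observation is that $\alpha_N$ restricted to $\vN(x_N,x_{N+1},\ldots)$ coincides with the full shift $\alpha=\alpha_0$ there (both send $x_n\mapsto x_{n+1}$ for $n\ge N$), hence on $\cA^\cT\subset\vN(x_N,x_{N+1},\ldots)$ one has $\alpha_N=\alpha$. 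To conclude $\alpha_N|_{\cA^\cT}=\id$ one therefore needs $\cA^\cT\subset\cA^\alpha$. But Theorem~\ref{thm:fix}, as stated here, proves $\cA^\cT=\cA^\alpha$ only for \emph{braidable} sequences (its proof of the inclusion $\cA^\cT\subset\cA^\alpha$ goes through $\cA^{\rho(\Bset_\infty)}$), and spreadability does not imply braidability. The equality in the merely spreadable case is part of the content of the cited \cite{Koes10a} and is not established in the present paper. So for part~(ii) your appeal to Theorem~\ref{thm:fix} is not quite legitimate, and your ``more direct'' alternative is wrong as written. For part~(i) your closing argument via $\cA^{\rho(\Sset_\infty)}=\cA^\cT$ is correct, since exchangeable implies braidable and Theorem~\ref{thm:fix} then applies.
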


\section{A braided noncommutative de Finetti theorem}
The following general notion of conditional independence in an operator algebraic setting
can actually be seen to arise from our main result in Theorem \ref{thm:definetti-1}, 
a noncommutative version of the famous classical de Finetti theorem.
\begin{Definition} \normalfont
Given the probability space $(\cA,\trace)$ let $\cN$ be a von Neumann subalgebra of $\cA$. The sequence $(x_n)_{n \ge 0} \subset \cA$ is \emph{fully $\cN$-independent}
if 
\[
E_{\cN}(xy) = E_{\cN}(x) E_{\cN}(y)
\]
for $x \in \vN\set{\cN, x_i}{i \in I}$ and $x \in \vN\set{\cN, x_j}{j \in J}$ whenever $I$ and $J$ are disjoint subsets of $\Nset_0$. 
\end{Definition}

\begin{Theorem}\label{thm:definetti-1}
Let $(\cA,\trace)$ be a tracial probability space and  suppose the
sequence $(x_n)_{n \ge 0} \subset \cA$ generates the von Neumann algebra $\cA$.
Consider the following statements:  
\begin{enumerate}
\item[(a)] 
$(x_n)_{n \ge 0}$ is exchangeable;
\item[(b)]
$(x_n)_{n \ge 0}$ is braidable;
\item[(c)]
$(x_n)_{n \ge 0}$ is spreadable;
\item[(d)]
$(x_n)_{n \ge 0}$ is stationary and fully $\cA^\cT$-independent;
\item[(e)]
$(x_n)_{n \ge 0}$ is identically distributed and fully $\cA^\cT$-independent.
\end{enumerate}
Then it holds (a) $\Rightarrow$ (b) $\Rightarrow$ (c) $\Rightarrow$ (d) $\Rightarrow$ (e). 
\end{Theorem}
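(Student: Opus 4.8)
The implications (a) $\Rightarrow$ (b) $\Rightarrow$ (c) are already at hand: (a) $\Rightarrow$ (b) is the remark just after the definition of braidability (a symmetric group representation is in particular a braid group representation via the Coxeter quotient, and both defining relations \eqref{eq:PR}, \eqref{eq:L} are retained), and (b) $\Rightarrow$ (c) is exactly Theorem \ref{thm:braidable}. The implication (d) $\Rightarrow$ (e) is trivial since stationarity implies identical distribution (Lemma \ref{lem:ds}) and full $\cA^\cT$-independence is carried over verbatim. So the entire content of the theorem is the implication (c) $\Rightarrow$ (d): \emph{spreadability implies stationarity together with full independence over the tail algebra}.

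For the stationarity half of (d), I would invoke Proposition \ref{prop:ds}(ii)--(iii): spreadability yields partial shifts $(\alpha_N)_{N\ge 0}$ with $\alpha_0$ the shift endomorphism $\alpha$, and $\alpha(x_n)=x_{n+1}$ gives stationarity immediately. The real work is full $\cA^\cT$-independence. Here the plan is the standard de Finetti-type argument, now to be run using only spreadability rather than exchangeability. First, Corollary \ref{cor:tail-spread}(ii) upgrades spreadability to the tail-algebra-valued level: $(x_0,x_1,\ldots)\stackrel{E_\cT}{=}(x_{n_0},x_{n_1},\ldots)$ for every increasing subsequence. Next, one shows the module property $E_\cT(axb)=aE_\cT(x)b$ for $a,b\in\cA^\cT$, and that $E_\cT=E_{\cA^\alpha}$ is also the fixed-point projection of $\alpha$, using Theorem \ref{thm:fix}; in particular $E_\cT\circ\alpha^k=E_\cT$ and $\alpha^k$ acts as the identity on $\cA^\cT$. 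The core step is then: given disjoint finite $I,J\subset\Nset_0$ and monomials $p$ in $\{x_i:i\in I\}\cup\cA^\cT$ and $q$ in $\{x_j:j\in J\}\cup\cA^\cT$, one wants $E_\cT(pq)=E_\cT(p)E_\cT(q)$. By the mean ergodic theorem for $\alpha$ (or by a direct Cesàro averaging argument, exploiting that $E_\cT(q)$ is $\alpha$-fixed), replace $q$ by an average $\frac1N\sum_{k=1}^N \alpha^{m_k}(q)$ over far-separated shifts $m_k$; spreadability guarantees each $\alpha^{m_k}(q)$ has the same $E_\cT$-distribution relative to $p$ as $q$ itself, while for indices $m_k$ large enough the shifted variables $\alpha^{m_k}(x_j)=x_{j+m_k}$ are disjoint from the index set of $p$, and a further spreadability move lets one ``slide $q$ off to infinity''. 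Passing $N\to\infty$, the average converges in the appropriate topology to $E_\cT(q)$, yielding $E_\cT(pq)=E_\cT(p)E_\cT(q)$. One then bootstraps from monomials to the full generated algebras $\vN(\cA^\cT,\{x_i\}_{i\in I})$ and $\vN(\cA^\cT,\{x_j\}_{j\in J})$ by $\sigma$-weak density and normality of $E_\cT$.

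\textbf{Main obstacle.} The delicate point is making the ``slide to infinity'' argument rigorous with \emph{only} spreadability available — with exchangeability one could simply permute the $J$-block arbitrarily far out and appeal to an asymptotic factorization, but an increasing reindexing is more rigid, so one must be careful that the subsequences chosen to realize $\alpha^{m_k}(q)$ are genuinely order-preserving on the union of the supports of $p$ and the shifted copies of $q$, and that the Cesàro averages converge in a topology in which $E_\cT$ is continuous (norm convergence of the averages, guaranteed by the mean ergodic theorem applied to the isometry induced by $\alpha$ on $L^2(\cA,\trace)$, together with normality of $E_\cT$, suffices). Checking these order and convergence conditions — and verifying that the conditional expectation onto the tail can legitimately be brought inside the limit — is the technical heart; everything else is the routine passage from test monomials to von Neumann algebras. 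The full details are carried out in \cite{Koes10a, GoKo09a}, to which we refer, and we only indicate the structure here.
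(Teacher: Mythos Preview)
Your outline matches the paper's approach almost exactly: both dispose of (a)$\Rightarrow$(b)$\Rightarrow$(c) and (d)$\Rightarrow$(e) as immediate consequences of earlier results, isolate (c)$\Rightarrow$(d) as the substantive claim, upgrade to $E_\cT$-valued spreadability via Corollary~\ref{cor:tail-spread}, and run a mean-ergodic Ces\`aro argument to obtain the factorization $E_\cT(xy)=E_\cT(x)E_\cT(y)$, referring to \cite{Koes10a} for the full details.

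There is, however, one point where your sketch overstates what the full-shift argument delivers. You assert that for disjoint finite $I,J$ and monomials $p,q$ supported on $I,J$ respectively, ``spreadability guarantees each $\alpha^{m_k}(q)$ has the same $E_\cT$-distribution relative to $p$ as $q$ itself''. This is correct only when $I$ and $J$ are \emph{ordered}, say $I<J$: then the map fixing $I$ pointwise and sending $j\mapsto j+m_k$ on $J$ is increasing, so spreadability applies and $E_\cT(pq)=E_\cT\big(p\,\alpha^{m_k}(q)\big)$ holds. But if $I$ and $J$ \emph{interlace} (some $j\in J$ lies below some $i\in I$), no increasing reindexing of $I\cup J$ fixes $I$ and sends $J$ to $J+m_k$ for large $m_k$, and the claimed identity simply fails. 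The paper is explicit about this: its worked example treats only the ordered case $I<J$, and it then states that the interlacing case ``is much more challenging to prove because the mean ergodic argument above fails'' and ``needs an order-preserving refined version of the von Neumann mean ergodic theorem'' whose formulation uses the \emph{partial} shifts $\alpha_N$ of Proposition~\ref{prop:ds}(ii), not just the full shift $\alpha=\alpha_0$. Your ``main obstacle'' paragraph correctly senses the difficulty but misidentifies the remedy: the issue is not merely to ``be careful'' that reindexings stay increasing---in the interlacing case they cannot---but to replace the single shift $\alpha$ by the family $(\alpha_N)_{N\ge 0}$ and prove a refined ergodic statement for them. That is the genuine technical heart of \cite[Section 8]{Koes10a}, and it is the ingredient your sketch omits.
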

Since we have already seen the implications
\begin{center}
exchangeable $\Rightarrow$ braidable $\Rightarrow$ spreadable $\Rightarrow$ 
stationary $\Rightarrow$ identically distributed 
\end{center}
(compare Lemma \ref{lem:ds} and Theorem \ref{thm:braidable}), the 
main difficulty is to show that spreadability implies full $\cA^\cT$-independence.
We refer to \cite{Koes10a} and give only a few hints and an example for one of the basic ideas in the proof. 

We know already from Corollary \ref{cor:tail-spread} that spreadability implies
spreadability with respect to the conditional expectation $E_\cT$:
\begin{align*}\tag{\text{*}} \label{eq:*}
(x_0, x_1, x_2, \ldots) \stackrel{E_\cT}{=}  (x_{n_0}, x_{n_1}, x_{n_2}, \ldots)
\end{align*}
for any increasing subsequence $(n_0, n_1, n_2, \ldots)$ of $(0,1,2,\ldots)$.
Let us study this property in an example to see how it produces factorization 
properties. We put $E:= E_\cT$ for notational convenience. 

\begin{Example}\normalfont
Consider the two monomials 
\begin{align*}
&&x &= x_1 x_4^3 x_5 x_4 &&\text{with index set $I=\{1,4,5\}$},&& \\
&&y &= x_8 x_6^3 x_8     &&\text{with index set $J = \{6,8\}$}.&& 
\end{align*}
Notice that we have
$I < J$ (in the pointwise sense). We infer from spreadability \eqref{eq:*} that
\begin{eqnarray*}
E(xy) 
&=& E\Big( (x_1 x_4^3 x_5 x_4) (x_8 x_6^3 x_8)\Big) \\
&=& E\Big( (x_1 x_4^3 x_5 x_4) (x_{8+k} x_{6+k}^3 x_{8+k})\Big)   \\      
&=& E\Big( (x_1 x_4^3 x_5 x_4) \alpha^k(x_{8} x_{6}^3 x_{8})\Big) 
\end{eqnarray*}
for any $k > 0$. For the last equation we have used that spreadability implies
stationarity. Now we pass to the mean ergodic average:
\[
E(xy) = E\Big( (x_1 x_4^3 x_5 x_4) \frac{1}{n}\sum_{k=0}^{n-1}\alpha^k(x_{8} x_{6}^3 x_{8})\Big) 
\]
and conclude with the von Neumann mean ergodic theorem that
\[
E_{\cA^\alpha}(y) = \lim_n\frac{1}{n}\sum_{k=0}^{n-1}\alpha^k(y)
\]
(in the strong operator topology). Next we use our fixed point characterization $\cA^\alpha = \cA^\cT$ 
from Theorem \ref{thm:fix} and the module property of conditional expectations to obtain the factorization 
\[
E(xy) = E(x E(y)) = E(x) E(y). 
\]
\end{Example}  
This example generalizes of course to any two monomials 
$x= x_{\ii(1)}^{\epsilon_1} \cdots x_{\ii(r)}^{\epsilon_r}$ and 
$y= x_{\jj(1)}^{\epsilon_1^\prime} \cdots x_{\jj(s)}^{\epsilon_s^\prime}$
as long as their index sets $I= \operatorname{Range} \ii$ and $J = \operatorname{Range}\jj$ satisfy $I < J$ or $I > J$.  

The general case of disjoint interlacing sets $I$ and $J$ is much more challenging to prove because
the mean ergodic argument above fails in such a situation. A proof of this case needs an 
order-preserving refined version of the von Neumann mean ergodic theorem. Its formulation involves 
the partial shifts $\alpha_N$ which characterize spreadability according to Proposition
\ref{prop:ds}. See \cite[Section 8]{Koes10a}.

\section{Characters}
The investigation of representations of the infinite braid group $\Bset_\infty$ is a vast field with many deep results and many open questions. The results of the previous sections
open up a new operator algebraic approach. As an illustration of our approach we concentrate in the following on the theory of characters. 

We start from the setting of Example \ref{ex:group} with $G= \Bset_\infty$ and $\chi$ a character of $\Bset_\infty$. The noncommutative probability space is $(\cA,\trace)$
where $\cA = \vN\set{\pi(g)}{g\in G}$ is generated by the unitary representation
$\pi$ associated to $\chi$. Further we consider the adjoint representation
\begin{eqnarray*}
\rho\colon \Bset_\infty &\rightarrow& \Aut{\cA,\trace},  \\
\tau &\mapsto& \Ad\pi(\tau) = \pi(\tau) \bullet \pi(\tau)^*.
\end{eqnarray*}

Can we find a braidable sequence here? A natural first idea is to try the constructive procedure described in Section \ref{section:ds} on the first Artin generator $\sigma_1$. However we notice that, apart from very special cases, localization \eqref{eq:L} (see Proposition \ref{prop:ds}) does not work because $u_1 := \pi(\sigma_1)$  does not commute with $u_i := \pi(\sigma_i)$ for all $i \ge 2$. But now a moment's reflection makes it clear how to overcome this difficulty: Instead of $\rho$ we have to consider a representation obtained from it by shifting the Artin generators. We also include an inversion in its definition which is purely conventional but which helps us later to obtain sequences which are already known from other points of view. In short, we consider the representation $\rho_1$ determined on the Artin generators $\sigma_i,\,i\in\Nset$, by
\[
\rho_1(\sigma_i) := \rho(\sigma^{-1}_{i+1})\,.
\]
In other words, for all $x \in \cA$ we have
\[
\rho_1(\sigma_i) x = u^{-1}_{i+1} \,x\, u_{i+1}\,.
\]
(Note that, by definition, $\rho_1$ is a representation and hence, apart from special cases, it is not equal to the composition of $\rho$ and inversion which is an anti-representation.)

Now it follows from the commutation relations \eqref{eq:B2} between the Artin generators that localization \eqref{eq:L} works for 
$u_1$ and the representation $\rho_1$, and it yields a braidable sequence $\big(v_i := \pi(\gamma_i) \big)_{i \in \Nset}$ where
\begin{eqnarray*}
\gamma_1 &:=& \sigma_1, \\
\gamma_2 &:=& \sigma^{-1}_2 {\sigma_1} \sigma_2 =
\sigma_1 {\sigma_2} \sigma_1^{-1},  \\
&\ldots& \\    
\gamma_i &:=& 
(\sigma^{-1}_i \sigma_{i-1}^{-1}\cdots \sigma^{-1}_2){\sigma_1} (\sigma_2 \cdots \sigma_{i-1} \sigma_i)
=
(\sigma_1 \sigma_2 \cdots \sigma_{i-1}){\sigma_i} (\sigma_{i-1}^{-1} \cdots \sigma_{2}^{-1} \sigma_1^{-1}).
\end{eqnarray*}

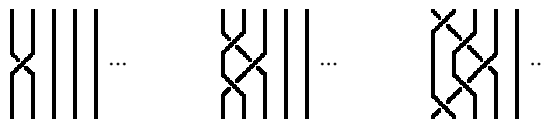
\begin{figure}[h]
\setlength{\unitlength}{0.14mm}
\begin{picture}(500,120)
\savebox{\artin}(20,20)[1]{\masterartin} 
\savebox{\artininv}(20,20)[1]{\masterartininv} 
\savebox{\strandr}(20,20)[1]{\masterstrandr} 
\savebox{\strandl}(20,20)[1]{\masterstrandl} 
\savebox{\horizontaldots}(20,20)[1]{\masterhorizontaldots}
\put(00,80){\usebox{\strandl}}  
\put(00,80){\usebox{\strandr}}  
\put(20,80){\usebox{\strandr}}
\put(40,80){\usebox{\strandr}}
\put(60,80){\usebox{\strandr}}
\put(00,60){\usebox{\strandl}}
\put(00,60){\usebox{\strandr}}  
\put(20,60){\usebox{\strandr}}
\put(40,60){\usebox{\strandr}}
\put(60,60){\usebox{\strandr}}
\put(00,40){\usebox{\artin}}  
\put(20,40){\usebox{\strandr}}
\put(40,40){\usebox{\strandr}}
\put(60,40){\usebox{\strandr}}
\put(80,40){\usebox{\horizontaldots}}
\put(00,20){\usebox{\strandl}}
\put(00,20){\usebox{\strandr}}  
\put(20,20){\usebox{\strandr}}
\put(40,20){\usebox{\strandr}}
\put(60,20){\usebox{\strandr}}
\put(00,0){\usebox{\strandl}}
\put(00,0){\usebox{\strandr}}  
\put(20,0){\usebox{\strandr}}
\put(40,0){\usebox{\strandr}}
\put(60,0){\usebox{\strandr}}
\put(200,80){\usebox{\strandl}}  
\put(200,80){\usebox{\strandr}}  
\put(220,80){\usebox{\strandr}}
\put(240,80){\usebox{\strandr}}
\put(260,80){\usebox{\strandr}}
\put(200,60){\usebox{\artin}}  
\put(220,60){\usebox{\strandr}}
\put(240,60){\usebox{\strandr}}
\put(260,60){\usebox{\strandr}}
\put(200,40){\usebox{\strandl}}  
\put(220,40){\usebox{\artin}}
\put(240,40){\usebox{\strandr}}
\put(260,40){\usebox{\strandr}}
\put(280,40){\usebox{\horizontaldots}}
\put(200,20){\usebox{\artininv}}  
\put(220,20){\usebox{\strandr}}
\put(240,20){\usebox{\strandr}}
\put(260,20){\usebox{\strandr}}
\put(200,0){\usebox{\strandl}}
\put(200,0){\usebox{\strandr}}  
\put(220,0){\usebox{\strandr}}
\put(240,0){\usebox{\strandr}}
\put(260,0){\usebox{\strandr}}
\put(400,80){\usebox{\artin}}  
\put(420,80){\usebox{\strandr}}
\put(440,80){\usebox{\strandr}}
\put(460,80){\usebox{\strandr}}
\put(400,60){\usebox{\strandl}}  
\put(420,60){\usebox{\artin}}
\put(440,60){\usebox{\strandr}}
\put(460,60){\usebox{\strandr}}
\put(400,40){\usebox{\strandl}}  
\put(420,40){\usebox{\strandl}}
\put(440,40){\usebox{\artin}}
\put(460,40){\usebox{\strandr}}
\put(480,40){\usebox{\horizontaldots}}
\put(400,20){\usebox{\strandl}}  
\put(420,20){\usebox{\artininv}}
\put(440,20){\usebox{\strandr}}
\put(460,20){\usebox{\strandr}}
\put(400,0){\usebox{\artininv}}  
\put(420,0){\usebox{\strandr}}
\put(440,0){\usebox{\strandr}}
\put(460,0){\usebox{\strandr}}
\end{picture}
\caption{Braid diagrams of $\gamma_1$, 
$\gamma_2$ and  $\gamma_3$ (left to right)}
\label{figure:squareroot}
\end{figure}

The different formulas of the $\gamma_i$'s follow from each other by the braid relations. It is clear that, conversely, one can solve for the Artin generators, so the sequence $\big(\gamma_i\big)_{i \in \Nset}$ also generates $\Bset_\infty$. We refer to \cite{GoKo09a} for a discussion of the corresponding presentation of $\Bset_\infty$ and for connections with free probability arising from the fact that the squares of the $\gamma_i$'s generate free groups.  In view of that in \cite{GoKo09a}  the $\gamma_i$'s have been named \emph{square roots of free generators}.

We have proved that the sequence $\big(v_i = \pi(\gamma_i)\big)_{i \in \Nset}$ is braidable and hence fully independent over its tail algebra. By Theorem \ref{thm:fix} this tail algebra is equal to the fixed point algebra
\[
\cA^{\rho_1} = \cA \cap \{u_2, u_3, u_4, \ldots \}^\prime,
\]
the relative commutant of the represented Artin generators excluding $\sigma_1$. We note that $\cA^{\rho_1}$ contains
\[
\cA^\rho =\cA \cap \{u_1, u_2, u_3, \ldots \}^\prime = \cA \cap \cA^\prime = \cZ(\cA),
\]
the center of $\cA$. The center is trivial iff $\cA$ is a factor iff the character $\chi$ is extremal.

If we can identify the tail algebra $\cA^{\rho_1}$ in a more concrete way then the independence gives us a lot of structural information about the noncommutative probability space $(\cA, \trace)$ and hence about the character $\chi$. So far this program has been worked out only in some special cases. In the following we discuss these special cases, and from this discussion it should become clear to the reader what we have in mind when we talk of `structural information'.  

Recall that the \emph{group von Neumann algebra} $L(\Bset_\infty)$ is generated by the 
left-regular representation $\set{L_\sigma}{\sigma \in \Bset_\infty}$ 
of $\Bset_\infty$ on the Hilbert space $\ell^2(\Bset_\infty)$, where  
\[
L_\sigma f(\sigma^\prime) := f(\sigma^{-1}\sigma^\prime). 
\]
The canonical trace on the group von Neumann algebra is associated to a character $\chi$ given by
\[
\chi(e) = 1,\quad \chi(\tau) = 0 \;\text{ for }\; \tau \not= e
\]
and so this fits into our scheme. We refer the reader to \cite[Corollary 5.3]{GoKo09a} for a proof of the following result. 
\begin{Proposition}[\cite{GoKo09a}]
\begin{enumerate}
\item
$L(\Bset_\infty)$ is a non-hyperfinite $II_1$-factor; 
\item
$L(\langle\sigma_2,\sigma_3,\ldots\rangle) \subset L(\Bset_\infty)$ is an irreducible subfactor inclusion with infinite Jones index.
\end{enumerate}
\end{Proposition}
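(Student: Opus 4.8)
The plan is to establish the two parts by combining the structural information we have accumulated about the braidable sequence $(v_i = \pi(\gamma_i))_{i\in\Nset}$ with the explicit description of the tail/fixed-point algebra $\cA^{\rho_1} = \cA \cap \{u_2,u_3,\ldots\}^\prime$. Throughout, $\cA = L(\Bset_\infty)$ with the canonical trace, which corresponds to the character $\chi$ supported at $e$. For part (i), the fact that $L(\Bset_\infty)$ is a $II_1$ factor is standard: $\Bset_\infty$ is an i.c.c.\ group (its center is trivial, and one checks every nontrivial conjugacy class is infinite, e.g.\ because the $\sigma_i$ all lie in one conjugacy class which is infinite, and more generally any nontrivial braid has infinitely many conjugates obtained by conjugating by $\sigma_i$ with $i$ large), so the group von Neumann algebra is a $II_1$ factor. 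Non-hyperfiniteness is the substantive point: I would deduce it from the free-probability input already cited in the excerpt. The squares $\gamma_i^2$ generate a free group $F_\infty$ inside $\Bset_\infty$ (the ``square roots of free generators'' picture from \cite{GoKo09a}), so $L(F_\infty) \cong L(\Fset_\infty)$ embeds in $L(\Bset_\infty)$ in a trace-preserving way. Since $L(\Fset_\infty)$ is not hyperfinite and hyperfiniteness passes to von Neumann subalgebras, $L(\Bset_\infty)$ is not hyperfinite.

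For part (ii), write $\cM := L(\langle \sigma_2,\sigma_3,\ldots\rangle)$, the von Neumann subalgebra of $\cA$ generated by $\{u_2,u_3,\ldots\}$. First I would observe that $\cM$ is itself a $II_1$ factor for the same i.c.c.\ reason applied to the subgroup $\langle\sigma_2,\sigma_3,\ldots\rangle \cong \Bset_\infty$ (it is isomorphic to the infinite braid group by the shift of generators). For irreducibility of the inclusion $\cM \subset \cA$ I must show $\cM^\prime \cap \cA = \Cset\1$. This is where the tail-algebra machinery enters: by Theorem~\ref{thm:fix} the tail algebra of the braidable sequence $(v_i)$ equals $\cA^{\rho_1}$, and the identification $\cA^{\rho_1} = \cA \cap \{u_2,u_3,\ldots\}^\prime = \cM^\prime\cap\cA$ is exactly the relative commutant in question. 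So it suffices to compute the tail algebra. But $(v_i)$ is fully independent over its tail algebra by Theorem~\ref{thm:definetti-1}, $\cA$ is a factor by part (i) so its center is trivial, and one shows directly that an element of $\cM^\prime\cap\cA$ must commute with $u_1$ as well: indeed $u_1 = v_1$ and conjugation by $u_1$ is the automorphism $\rho_1(\sigma_1)^{-1}$ wait — more cleanly, an element fixed by all $\Ad u_i$ for $i\ge 2$ which also turns out to be fixed by $\Ad u_1$ lies in $\cA\cap\cA^\prime = \cZ(\cA) = \Cset\1$. The argument that $\cM^\prime\cap\cA \subseteq \{u_1\}^\prime$ is the delicate part and would use the explicit $\gamma_i$ relations together with factoriality: since the $\gamma_i^2$ generate a free group, the only elements of $\cA$ commuting with the whole tail-shift structure are scalars; concretely, one invokes \cite[Corollary 5.3]{GoKo09a} or re-derives it via the de Finetti independence, which forces the tail algebra to be trivial because a sequence that is both i.i.d.\ (in the operator-valued sense over a trivial-center factor) and has atomic tail must have $\cA^\cT = \Cset\1$.

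Finally, the infinite Jones index $[\cA : \cM] = \infty$ I would obtain by exhibiting infinitely many mutually orthogonal vectors in $L^2(\cA)$ that are ``orthonormal over $\cM$'', i.e.\ $E_\cM(w_j^* w_k) = \delta_{jk}\1$ for a sequence $w_1, w_2,\ldots \in \cA$. The natural candidates are $w_j := u_1 u_2 \cdots u_j$ or, even simpler, group elements $g_j \in \Bset_\infty$ lying in distinct cosets of the subgroup $H := \langle\sigma_2,\sigma_3,\ldots\rangle$ such that additionally $g_k^{-1} g_j \notin H$ for $j\ne k$; then $E_\cM(L_{g_k}^* L_{g_j}) = E_\cM(L_{g_k^{-1}g_j}) = \chi(g_k^{-1}g_j)\,\1 \cdot(\text{projection onto }H)$ which vanishes because $g_k^{-1}g_j \notin H$, so $\tau$ of it is $0$ and the conditional expectation onto $\cM = L(H)$ kills it. Since $H$ has infinite index in $\Bset_\infty$ as an abstract group (e.g.\ $\sigma_1, \sigma_1^2, \sigma_1^3, \ldots$ are in distinct cosets because $\sigma_1^{-k}\sigma_1^{j} = \sigma_1^{j-k} \notin H$ for $j\ne k$, as $\sigma_1$ has infinite order and the abelianization detects it), we get an infinite orthonormal system over $\cM$, hence $\dim_\cM L^2(\cA) = \infty$, i.e.\ infinite Jones index.

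\medskip

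The main obstacle I anticipate is the irreducibility statement $\cM^\prime \cap \cA = \Cset\1$: showing that commuting with $u_2, u_3, \ldots$ already forces an element to be scalar is genuinely a statement about the combinatorics of $\Bset_\infty$ (it is false for, say, $\Sset_\infty$-type relative commutants in other contexts), and the honest route is either the direct group-theoretic computation that the relative commutant of $H = \langle\sigma_2,\sigma_3,\dots\rangle$ in $\Bset_\infty$ is trivial, or the operator-algebraic route via the braided de Finetti theorem which identifies $\cM^\prime\cap\cA$ with a tail algebra that independence plus factoriality forces to be $\Cset\1$. Either way this is the step where one genuinely uses the braid relations \eqref{eq:B1}, \eqref{eq:B2} rather than soft arguments, and it is where I would lean on \cite[Corollary 5.3]{GoKo09a}.
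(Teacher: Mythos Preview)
The paper does not prove this proposition at all: it simply cites \cite[Corollary 5.3]{GoKo09a} and moves on. So there is no ``paper's own proof'' to compare against; your attempt is essentially a sketch of what the cited reference contains.

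That said, a few comments on your sketch. Your argument for (i) is fine: i.c.c.\ gives the $II_1$-factor, and the embedding $L(F_\infty)\hookrightarrow L(\Bset_\infty)$ via the subgroup generated by the $\gamma_i^2$ (which the paper does mention) kills hyperfiniteness. Your infinite-index argument is morally right but the specific claim that $\sigma_1^{j-k}\notin H=\langle\sigma_2,\sigma_3,\ldots\rangle$ for all $j\ne k$ is not established by the abelianization (which does not separate $\sigma_1$ from $H$); a clean fix is to pass to the quotient $\Sset_\infty$, where $H$ maps to the stabilizer of $0$ and the coset space is the infinite orbit $\Nset_0$, or to use representatives such as $\sigma_1\sigma_2\cdots\sigma_j$.

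The genuine gap is exactly where you locate it: irreducibility. Your identification $\cM'\cap\cA=\cA^{\rho_1}$ is correct, but none of the mechanisms you invoke (de Finetti independence, factoriality, ``atomic tail'') actually force $\cA^{\rho_1}=\Cset\1$. Independence over the tail algebra says nothing about the size of the tail algebra, and factoriality only gives $\cZ(\cA)=\Cset$, not $\cA^{\rho_1}=\Cset$. The honest content here is a group-theoretic computation in $\Bset_\infty$ (roughly: no nontrivial element of the group algebra commutes with all $u_i$, $i\ge 2$), and that is precisely what lives in \cite{GoKo09a}. You are right to flag it, and right that it cannot be extracted from the soft machinery in the present paper.
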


In particular we have $\cZ(\cA) = \cA^\rho = \Cset = \cA^{\rho_1} $. In fact, the last equality is exactly how irreducibility for subfactors is defined.
It follows that the sequence $\big( v_i \big)_{i \in \Nset}$ is fully $\Cset$-independent in this case. Written out this amounts to the following factorization property of the trace.

\begin{Corollary}[\cite{GoKo09a}]
For $I \subset \Nset$ we define
$
\cB_I := \vN(\pi(\gamma_i): i \in I).
$
Let $x\in \cB_I$ and $y \in \cB_J$ with $I \cap J = \emptyset$. Then
\begin{eqnarray*}
\trace(xy)= \trace(x) \trace(y).
\end{eqnarray*}
\end{Corollary}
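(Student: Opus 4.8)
The plan is to deduce the Corollary directly from the results already assembled in the excerpt, with essentially no new work. First I would recall that the sequence $\big(v_i = \pi(\gamma_i)\big)_{i \in \Nset}$ has been shown to be braidable: it is obtained from the constructive procedure applied to $x_0 = u_1$ and the representation $\rho_1 \colon \Bset_\infty \to \Aut{\cA,\trace}$, since the commutation relations \eqref{eq:B2} guarantee that $u_1$ satisfies localization \eqref{eq:L} for $\rho_1$. By Theorem \ref{thm:definetti-1} (specifically the implication (b) $\Rightarrow$ (d)), a braidable sequence that generates $\cA$ is fully $\cA^\cT$-independent over its tail algebra $\cA^\cT$; and since the $\gamma_i$'s generate $\Bset_\infty$ — one can solve back for the Artin generators — the sequence $(v_i)_{i \in \Nset}$ generates $\cA = \vN\set{\pi(g)}{g\in\Bset_\infty}$, so the theorem applies.

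Next I would identify the tail algebra. By Theorem \ref{thm:fix}, $\cA^\cT = \cA^{\rho_1(\Bset_\infty)}$. Since $\rho_1(\sigma_i) = \Ad\pi(\sigma_{i+1}^{-1})$ for $i \in \Nset$, the fixed point algebra $\cA^{\rho_1(\Bset_\infty)}$ consists of those $x \in \cA$ that commute with every $u_{i+1} = \pi(\sigma_{i+1})$, $i \in \Nset$, i.e.
\[
\cA^\cT = \cA \cap \{u_2, u_3, u_4, \ldots\}' = L(\langle \sigma_2, \sigma_3, \ldots\rangle)' \cap \cA.
\]
By part (ii) of the preceding Proposition, the inclusion $L(\langle\sigma_2,\sigma_3,\ldots\rangle) \subset L(\Bset_\infty)$ is irreducible, which by definition means this relative commutant is $\Cset\1$. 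Hence $\cA^\cT = \Cset\1$ in the case of the canonical trace on $L(\Bset_\infty)$.

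With $\cA^\cT = \Cset\1$, the $\trace$-preserving conditional expectation $E_{\cA^\cT}$ is simply $x \mapsto \trace(x)\1$. Full $\Cset\1$-independence of $(v_i)_{i\in\Nset}$ then says: for disjoint $I, J \subset \Nset$, if $x \in \vN\set{\Cset\1, v_i}{i\in I} = \cB_I$ and $y \in \vN\set{\Cset\1, v_j}{j\in J} = \cB_J$, then $E_{\cA^\cT}(xy) = E_{\cA^\cT}(x)E_{\cA^\cT}(y)$, i.e. $\trace(xy)\1 = \trace(x)\trace(y)\1$, which is exactly the asserted factorization $\trace(xy) = \trace(x)\trace(y)$.

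There is no genuine obstacle here, since every ingredient is quoted: the only points requiring a moment's care are (a) confirming that $(v_i)_{i\in\Nset}$ really does generate $\cA$ — this uses that the $\gamma_i$ generate $\Bset_\infty$, noted in the text — so that Theorems \ref{thm:fix} and \ref{thm:definetti-1} are applicable in their minimal form, and (b) matching the definition of "fully $\cN$-independent" (with $\cN = \Cset\1$) to the statement, noting that $\vN(\Cset\1, v_i : i \in I)$ coincides with $\cB_I := \vN(\pi(\gamma_i): i \in I)$ because $\Cset\1$ adds nothing. The identification of the tail algebra with the relative commutant, and the appeal to irreducibility to collapse it to the scalars, is the conceptual heart, but it is already done in the paragraph preceding the Corollary.
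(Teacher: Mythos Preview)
Your proposal is correct and follows essentially the same route as the paper: braidability of $(v_i)$ gives full independence over the tail algebra, Theorem \ref{thm:fix} identifies the tail algebra with $\cA^{\rho_1}$, the irreducibility statement in the preceding Proposition collapses $\cA^{\rho_1}$ to $\Cset$, and full $\Cset$-independence is exactly the asserted trace factorization. The paper states this in one sentence just before the Corollary; your write-up simply makes the verifications (generation of $\cA$ by the $v_i$, and $\vN(\Cset\1,v_i:i\in I)=\cB_I$) explicit.
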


Another class of examples where a rather complete analysis has been achieved arises from the study of characters of the (infinite) symmetric group $\Sset_\infty$. Recall that a presentation of $\Sset_\infty$ can be given by adding the relations \eqref{eq:S} to Artin's presentation of the braid group $\Bset_\infty$. In other words there exists a quotient map (surjective homomorphism) from $\Bset_\infty$ to $\Sset_\infty$ which, for all $i \in \Nset$, maps the Artin generators $\sigma_i$ of $\Bset_\infty$ to the Coxeter generators $\sigma_i$ of $\Sset_\infty$. (We don't expect any confusion from this double meaning because from now on we only deal with $\Sset_\infty$.) If $\chi$ is a character of $\Sset_\infty$
then by composing it with the quotient map we obtain a character of $\Bset_\infty$. Representations of $\Sset_\infty$ can be identified with those representations of $\Bset_\infty$ which factorize through the quotient map.
This means that the definitions and results above are still available. But there are some special features of $\Sset_\infty$ which simplify our task. In fact, using our methods we can give a new fully operator algebraic proof of a famous classical result by Thoma (1964) which classifies all extremal characters of $\Sset_\infty$.

\begin{Theorem}[\cite{Thom64a}]
An extremal character of the group $\Sset_\infty$ is of the form 
\[
\chi(\sigma)= \prod_{k=2}^\infty \left(\sum_{i=1}^\infty a_i^k + (-1)^{k-1} \sum_{j=1}^\infty b_j^k\right)^{m_k(\sigma)}.
\]
Here $m_k(\sigma)$ is the number of $k$-cycles in the permutation $\sigma$
and the two sequences $(a_i)_{i=1}^\infty, (b_j)_{j=1}^\infty $ satisfy 
\begin{eqnarray*}
a_1 \ge a_2 \ge \cdots \ge 0, \qquad b_1 \ge b_2 \ge \cdots \ge 0, \qquad \sum_{i=1}^\infty a_i + \sum_{j=1}^\infty b_j \le 1.
\end{eqnarray*}
\end{Theorem}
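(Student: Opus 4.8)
The plan is to derive Thoma's classification from the machinery developed above, applied to the character $\chi$ of $\Sset_\infty$ via the braidable sequence $\big(v_i = \pi(\gamma_i)\big)_{i \in \Nset}$ and the identification of its tail algebra. First I would assume $\chi$ is extremal, so that $\cA$ is a factor and $\cZ(\cA) = \cA^\rho = \Cset$. The key structural simplification from the relations \eqref{eq:S} is that in $\Sset_\infty$ each $v_i$ is a self-adjoint $\{-1,1\}$-valued random variable (a transposition, conjugated), hence $v_i = \pi(\gamma_i)$ with $v_i^2 = \1$ and $\trace(v_i) = \chi(\gamma_i) = \chi(\sigma_1)$ a single real parameter, call it $c \in [-1,1]$. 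By Theorem \ref{thm:definetti-1}, $(v_i)_{i \in \Nset}$ is spreadable and hence fully $\cA^\cT$-independent, where by Theorem \ref{thm:fix} $\cA^\cT = \cA^{\rho_1}$ is the relative commutant $\cA \cap \{u_2, u_3, \ldots\}'$.

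Next I would exploit the extra rigidity available in the symmetric-group case to pin down $\cA^\cT$. The generators $v_i$ need not commute, but the relations \eqref{eq:S} and \eqref{eq:B1}, \eqref{eq:B2} impose that $v_i v_{i+1} v_i = v_{i+1} v_i v_{i+1}$ while $v_i, v_j$ commute for $|i - j| > 1$: so $(v_i)$ is itself a representation of $\Sset_\infty$ inside $\cA$, and in fact one checks that $v_i$ corresponds to the transposition exchanging a fixed pair of ``slots''. The braidability then says this internal symmetric-group sequence is conditionally independent over $\cA^\cT$. The standard route (following \cite[Section 7--8]{Koes10a}) is to use the conditional independence together with stationarity to decompose $\cA$ as an infinite tensor product over $\cA^\cT$ of copies of $\vN(v_0) \vee \cA^\cT$, realising $\chi$ as an average over $\cA^\cT$ of product states; extremality of $\chi$ forces $\cA^\cT = \Cset$, so the $v_i$ become genuinely classical independent identically distributed $\{\pm 1\}$ variables with $\trace(v_i) = c$ — but this alone gives only a one-parameter family, so the transpositions $v_i$ must be refined.

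The heart of the argument, and the main obstacle, is extracting the Thoma parameters $(a_i), (b_j)$. Independence of the $v_i$'s over $\Cset$ is too coarse because the $v_i$ do not generate $\Sset_\infty$ ``independently'' as a group — the point is to analyse how the generators $\sigma_i$ (not the $\gamma_i$) sit relative to the tail filtration. I would instead run the de Finetti machinery on a different braidable sequence adapted to cycles: the key quantities are $\trace$ of products along cycles, i.e. $\chi$ evaluated on $k$-cycles. Using the factorization $\trace(xy) = \trace(x)\trace(y)$ for $x \in \cB_I$, $y \in \cB_J$ with $I \cap J = \emptyset$ (the Corollary above, now valid because $\cA^\cT = \Cset$), multiplicativity $\chi(\sigma) = \prod_k (\text{something})^{m_k(\sigma)}$ over the cycle type follows once one shows that disjoint cycles give independent contributions — this is precisely what full independence over the trivial tail algebra delivers. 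What remains is to identify the value $p_k := \chi(k\text{-cycle})$ as a power sum $\sum a_i^k + (-1)^{k-1}\sum b_j^k$: this is a moment problem. The numbers $(p_k)_{k \ge 2}$, together with $p_1 = 1$, must be the ``signed power sums'' of a probability measure; positive-definiteness of $\chi$ (which survives as a constraint on the $p_k$) forces, via a Hamburger-type moment argument on the two-sided sequence of eigenvalue multiplicities of a suitable generator in $\cA^\cT$-components, the existence of the decreasing sequences $(a_i) \ge 0$, $(b_j) \ge 0$ with $\sum a_i + \sum b_j \le 1$. Concretely, I expect the cleanest implementation is: show the GNS vector $\xi$ decomposes $\cA$ as a factor of type II$_1$ in which the represented $\Sset_\infty$ acts by an AF-type inductive limit; the asymptotic character formula of Vershik--Kerov then reads off $(a_i)$ as the limiting normalised row lengths and $(b_j)$ as the column lengths of the associated Young-diagram sequence, with the trace constraint $\sum a_i + \sum b_j \le 1$ coming from normalisation. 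The genuinely hard step is this last identification of the tail data with a point in the Thoma simplex; everything before it is a direct application of Theorems \ref{thm:fix} and \ref{thm:definetti-1}. I refer to \cite{Koes10a,GoKo10a} for the details of this final analysis.
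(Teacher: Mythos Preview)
There are two genuine gaps in your argument, and together they cause you to miss the object that actually carries the Thoma parameters.

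First, your claimed relations among the $v_i$ are wrong. The star generators $\gamma_i = (0,i)$ satisfy $\gamma_i \gamma_j \gamma_i = \gamma_j \gamma_i \gamma_j = (i,j)$ for \emph{all} $i \neq j$, but they \emph{never} commute: $\gamma_i \gamma_j = (0,i,j) \neq (0,j,i) = \gamma_j \gamma_i$. So the $(v_i)$ do not give a Coxeter-type presentation, and your attempt to realise $\cA$ as an infinite tensor product of copies of $\vN(v_1)$ over the tail cannot go through in this form.

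Second, and more seriously, extremality of $\chi$ does \emph{not} force $\cA^\cT = \Cset$. Extremality gives $\cZ(\cA) = \cA^\rho = \Cset$, but the tail algebra is $\cA^{\rho_1} = \cA \cap \{u_2, u_3, \ldots\}'$, which is in general strictly larger. The paper's route is precisely to compute this algebra: setting $A_0 := E_0(v_1)$ (the ``limit $2$-cycle'') and $C_k := E_{-1}(A_0^{k-1})$, one shows via the $\cA^{\rho_1}$-independence of the $v_i$'s that $E_0$ applied to a represented $k$-cycle yields $A_0^{k-1}$ or $C_k$, and hence $\cA^{\rho_1} = \vN(A_0, C_k \mid k \in \Nset)$ is commutative. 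In the factor case the $C_k$'s are scalars and $\cA^{\rho_1} = \vN(A_0)$. Thoma multiplicativity then reads $\chi(\tau) = \prod_{k \ge 2} [\trace(A_0^{k-1})]^{m_k(\tau)}$, and the Thoma parameters $(a_i),(b_j)$ are extracted by a spectral analysis of the selfadjoint contraction $A_0$: its point spectrum in $(0,1]$ gives the $a_i$, its point spectrum in $[-1,0)$ gives the $-b_j$, and the defect $1 - \sum a_i - \sum b_j$ is the mass at $0$. You correctly identify the multiplicativity over disjoint cycles as coming from full independence (via the fact that disjoint cycles use disjoint sets of star generators), but because you set the tail to $\Cset$ you lose $A_0$, and are then forced to import the Thoma parameters from Vershik--Kerov asymptotics, which is external to this operator-algebraic approach and essentially assumes the result.
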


Our new proof is fully presented in \cite{GoKo10a}. In the following let us sketch the proof of one of the main features of the classification: Thoma multiplicativity. This becomes very transparent in our setting, so much so that it may be justified to think of Thoma's theorem itself as a noncommutative de Finetti type theorem. 

The images of the square roots of free generators $\gamma_i$ (which we again call $\gamma_i$) turn out to be very special transpositions if we consider the defining action of $\Sset_\infty$ on $\{0,1,2,3,...\}$ by permutations:
\[
\gamma_i = (0,i)  \qquad (i \in \Nset).
\]
Algebraists call these transpositions \emph{star generators} of $\Sset_\infty$: pictorially they connect the elements of $\Nset$ to the `center' $0$ of the `star'.  To prove that indeed we obtain this simple form under the quotient map it is enough to track the permutations induced by the braids in Figure \ref{figure:squareroot} or in the corresponding algebraic definition.

For later use we note the following elementary but important property of star generators.
\begin{Lemma}
Let $k \ge 2$. A $k$-cycle $\sigma = (n_1, n_2, n_3, \ldots, n_k) \in \Sset_\infty$ is of the form 
\[
\tau = \gamma_{n_1} \gamma_{n_2} \gamma_{n_3} \cdots \gamma_{n_{k-1}}\gamma_{n_k} \gamma_{n_1}, 
\]
where in the case $\tau(0) \neq 0$ we put
$n_1=0$  and define $\gamma_0$ to be the unit element $e$.
\end{Lemma}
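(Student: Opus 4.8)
The plan is to reduce everything to one elementary fact about \emph{star generators}. First I record what is already available: $\gamma_i=(0,i)$ for $i\in\Nset$, and $\gamma_0:=e$ by the convention in the statement. The fact I would prove is: for \emph{distinct} $m_1,\dots,m_j\in\Nset$ the product $\gamma_{m_1}\gamma_{m_2}\cdots\gamma_{m_j}$ equals the $(j+1)$-cycle on $\{0,m_1,\dots,m_j\}$, explicitly $(0,m_1,\dots,m_j)$ once the left/right composition convention is fixed. This is an immediate induction on $j$: the case $j=1$ is the identity $\gamma_{m_1}=(0,m_1)$, and for the step one multiplies the $(j+1)$-cycle $\gamma_{m_1}\cdots\gamma_{m_{j-1}}=(0,m_1,\dots,m_{j-1})$ by the transposition $(0,m_j)$ and reads off that $m_{j-1}$ is now routed to $m_j$, that $m_j$ is routed to $0$, and that all other points are fixed, giving $(0,m_1,\dots,m_j)$.

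Next I would split according to whether $\sigma$ moves $0$. If $\sigma(0)\neq 0$, then $0$ occurs among the entries of the cycle, so after cyclically rotating the notation I may take $n_1=0$; then $\gamma_{n_1}=\gamma_0=e$, hence $\tau=\gamma_{n_2}\cdots\gamma_{n_k}$ is a product of $k-1$ genuine star transpositions, which by the fact above is the $k$-cycle $(0,n_2,\dots,n_k)=\sigma$. If $\sigma(0)=0$, then no $n_i$ equals $0$, so every $\gamma_{n_i}$ is a genuine transposition; write $\tau=\gamma_{n_1}\bigl(\gamma_{n_2}\cdots\gamma_{n_k}\bigr)\gamma_{n_1}$. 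The bracketed factor is the $k$-cycle $(0,n_2,\dots,n_k)$, and since $\gamma_{n_1}=(0,n_1)$ is an involution, $\tau$ equals the conjugate $\gamma_{n_1}(0,n_2,\dots,n_k)\gamma_{n_1}^{-1}$, obtained by applying the transposition $(0,n_1)$ to each entry of the cycle. Since $(0,n_1)$ interchanges $0$ and $n_1$ and fixes $n_2,\dots,n_k$, the conjugate is $(n_1,n_2,\dots,n_k)=\sigma$; in particular $0$ has left the support, in agreement with $\sigma(0)=0$.

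I do not expect a genuine obstacle here: the argument is bookkeeping. The one place to be careful is the orientation of cycles produced by products of transpositions, that is, pinning down the composition convention for permutations so that it is consistent both with $\gamma_i=(0,i)$ and with the presentations of the $\gamma_i$ through the Artin generators; once that convention is fixed, the induction and the two case computations are routine. For tidiness one may also observe that in the case $\sigma(0)=0$ the result is independent of which cyclic rotation of $(n_1,\dots,n_k)$ one starts from, since all of them represent the same cycle $\sigma$.
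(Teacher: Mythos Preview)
The paper does not actually prove this lemma: it simply states that ``the proof is left as an easy exercise.'' Your argument is a correct and natural way to carry out that exercise, via the basic identity $\gamma_{m_1}\cdots\gamma_{m_j}=(0,m_1,\dots,m_j)$ together with the case split on whether $0$ lies in the support of the cycle. One cosmetic slip: in your inductive step the product $\gamma_{m_1}\cdots\gamma_{m_{j-1}}$ is a $j$-cycle, not a $(j+1)$-cycle, and ``all other points are fixed'' should read ``all other points have the same image as before''; neither affects the substance.
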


The proof is left as an easy exercise. The importance is due to the following fact:

\begin{Corollary}[\cite{GoKo10a}]\label{cor:cycles}
Disjoint cycles can be expressed by disjoint sets of star generators. 
\end{Corollary}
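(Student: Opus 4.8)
The plan is to read off the statement directly from the preceding Lemma together with the defining property of star generators $\gamma_i = (0,i)$. First I would recall what the Lemma gives: a $k$-cycle $\sigma = (n_1, n_2, \ldots, n_k)$ with all $n_j \ge 1$ is expressed as $\gamma_{n_1}\gamma_{n_2}\cdots\gamma_{n_k}\gamma_{n_1}$, while a $k$-cycle fixing $0$ in its support (i.e.\ with $\sigma(0) \ne 0$, say $\sigma = (0, n_2, \ldots, n_k)$) is expressed as $\gamma_{n_2}\cdots\gamma_{n_k}$ after setting $\gamma_0 = e$. In either case the \emph{set} of star generators appearing in the word is precisely $\{\gamma_m : m \in \mathrm{supp}(\sigma) \setminus \{0\}\}$, where $\mathrm{supp}$ denotes the support (the set of points actually moved, possibly together with $0$). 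So the content of the Corollary is: if $\sigma$ and $\sigma'$ are disjoint cycles, then the associated sets of star generators $\{\gamma_m : m \in \mathrm{supp}(\sigma)\setminus\{0\}\}$ and $\{\gamma_m : m \in \mathrm{supp}(\sigma')\setminus\{0\}\}$ are disjoint.

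The key step is then elementary: disjoint cycles have disjoint supports by definition, so the index sets $\mathrm{supp}(\sigma)\setminus\{0\}$ and $\mathrm{supp}(\sigma')\setminus\{0\}$ are disjoint as subsets of $\Nset$ (removing the common point $0$, if present, only helps), and hence so are the corresponding collections of $\gamma$'s, since $m \mapsto \gamma_m$ is injective on $\Nset$ (indeed $\gamma_m = (0,m)$ determines $m$). I would also note, for the application to characters, that if $\sigma$ is a product of disjoint cycles $\sigma = c_1 c_2 \cdots c_r$ then, because the cycles commute and their star-generator words involve mutually disjoint sets of generators, one obtains a factorization of $\pi(\sigma)$ (equivalently of $\sigma$ itself) into commuting blocks, each block built from the $\gamma_i$'s of a single cycle. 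This is the precise form in which the Corollary feeds into the de Finetti / full-independence machinery: the $v_i = \pi(\gamma_i)$ form a spreadable (indeed braidable) sequence, so blocks indexed by disjoint sets factorize under $E_\cT$, and disjoint cycles land in such blocks.

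There is essentially no obstacle here; the only point requiring a word of care is the bookkeeping around the point $0$. When one of the cycles, say $\sigma$, moves $0$, its word uses $\gamma_0 = e$, which is the unit and therefore belongs to (and commutes with) everything; it contributes nothing to the relevant index set $\mathrm{supp}(\sigma)\setminus\{0\}$, so it does not spoil disjointness. Since at most one of two disjoint cycles can move $0$, this case distinction is harmless. I would phrase the argument so that the index set attached to a cycle is always taken inside $\Nset$ (i.e.\ excluding $0$), after which the claim is immediate from disjointness of supports.
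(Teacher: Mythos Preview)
Your proposal is correct and matches the paper's approach: the paper gives no separate proof of the corollary, treating it as an immediate consequence of the preceding Lemma, and what you have written is exactly the spelling-out of that consequence (identify the star generators used for a cycle with the nonzero points of its support, then note that disjoint cycles have disjoint supports, with the $\gamma_0=e$ convention handling the at most one cycle through $0$). One small wording slip: you write ``a $k$-cycle fixing $0$ in its support (i.e.\ with $\sigma(0)\neq 0$)''; you mean \emph{having} $0$ in its support, but the intent is clear and the argument is unaffected.
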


Because $(v_i = \pi(\gamma_i))_{i \in \Nset}$ arises from the constructive procedure from Section \ref{section:ds} with $v_1 = \pi(\gamma_1) = \pi(\sigma_1) = u_1$ and the representation $\rho_1$ it is clear that they are braidable. In fact they are even exchangeable in this setting because now $\rho_1$ is a representation of $\Sset_\infty$. From the de Finetti theorem, Theorem \ref{thm:definetti-1}, and Corollary \ref{cor:cycles} we immediately infer:

\begin{Proposition}[\cite{GoKo10a}] \label{prop:4.6}
Let $\tau_1$ and $\tau_2$ be disjoint cycles in $\Sset_\infty$.
Then $\vN(\pi(\tau_1))$ and $\vN(\pi(\tau_2))$ are $\cA^{\rho_1}$-independent. 
\end{Proposition}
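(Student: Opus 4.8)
The plan is to reduce the statement to the braided de~Finetti theorem via the combinatorics of star generators. First I would use the preceding Lemma together with Corollary~\ref{cor:cycles} to record that a $k$-cycle $\tau_1 = (n_1,\ldots,n_k)$ is a word in the star generators $\gamma_{n_1},\ldots,\gamma_{n_k}$, with the convention $\gamma_0 = e$ in case $\tau_1$ moves $0$; consequently $\pi(\tau_1)$ lies in $\cB_{I_1} := \vN(\pi(\gamma_i) : i \in I_1)$ with $I_1 = \{n_1,\ldots,n_k\}\setminus\{0\}$, and similarly $\pi(\tau_2) \in \cB_{I_2}$ with $I_2$ the support of $\tau_2$ with $0$ removed. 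Disjointness of $\tau_1$ and $\tau_2$ forces $I_1 \cap I_2 = \emptyset$ as subsets of $\Nset_0$: the only point that could lie in both supports is $0$, and that has been excised. Hence $\vN(\pi(\tau_1)) \subseteq \cB_{I_1} \subseteq \vN(\cA^{\rho_1}, \pi(\gamma_i) : i \in I_1)$ and likewise for $\tau_2$ over $I_2$.

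Next I would invoke the machinery already assembled for the sequence $v_i := \pi(\gamma_i)$. It arises from the constructive procedure applied to $u_1$ and the representation $\rho_1 \colon \Sset_\infty \to \Aut{\cA,\trace}$; since the $\gamma_i$ generate $\Sset_\infty$ (one can solve back for the Coxeter generators), the $v_i$ generate $\cA = \vN(\pi(\Sset_\infty))$. As observed in the text, the sequence $(v_i)$ is in fact exchangeable here, because $\rho_1$ is an honest representation of $\Sset_\infty$, so Theorem~\ref{thm:definetti-1}, implication (a)$\Rightarrow$(d), yields full $\cA^\cT$-independence of $(v_i)$. By the fixed-point identification of Theorem~\ref{thm:fix}, the tail algebra coincides with the fixed-point algebra of the implementing representation, i.e. $\cA^\cT = \cA^{\rho_1} = \cA \cap \{u_2, u_3, \ldots\}'$.

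Finally I would read off the conclusion. Applying the definition of full $\cA^{\rho_1}$-independence to the disjoint index sets $I_1, I_2$ gives
\[
E_{\cA^{\rho_1}}(xy) = E_{\cA^{\rho_1}}(x)\, E_{\cA^{\rho_1}}(y)
\]
for all $x \in \vN(\cA^{\rho_1}, v_i : i \in I_1) \supseteq \vN(\pi(\tau_1))$ and $y \in \vN(\cA^{\rho_1}, v_j : j \in I_2) \supseteq \vN(\pi(\tau_2))$, which is exactly $\cA^{\rho_1}$-independence of $\vN(\pi(\tau_1))$ and $\vN(\pi(\tau_2))$.

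I do not expect a serious analytic obstacle: this is genuinely an immediate corollary of Theorem~\ref{thm:definetti-1}, Theorem~\ref{thm:fix} and Corollary~\ref{cor:cycles}. The only points demanding care are bookkeeping ones — verifying that the star-generator index sets attached to disjoint cycles are truly disjoint in $\Nset_0$ once the $\gamma_0 = e$ convention is handled, checking the harmless reindexing of $(v_i)_{i\in\Nset}$ against the $n\ge 0$ convention of the general theory, and confirming that the generating hypothesis $\cA = \vN(v_1, v_2, \ldots)$ is in force so that both theorems may be quoted verbatim.
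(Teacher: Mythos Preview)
Your proposal is correct and follows exactly the route the paper indicates: the paper states that the proposition is immediate from the noncommutative de~Finetti theorem (Theorem~\ref{thm:definetti-1}) together with Corollary~\ref{cor:cycles}, having already identified the tail algebra with $\cA^{\rho_1}$ via Theorem~\ref{thm:fix}, and you have simply spelled out these steps carefully. The bookkeeping points you flag (handling $\gamma_0=e$, the generating hypothesis for $\cA$) are indeed the only things requiring attention, and you have addressed them correctly.
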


As before we need to identify the fixed point algebra $\cA^{\rho_1}$ to make our independence results more concrete.
Let us denote by $E_0$ the conditional expectation onto $\cA^{\rho_1}$ and by $E_{-1}$ the conditional expectation onto the center $\cZ(\cA) = \cA^\rho$. Recall that $\cZ(\cA) \subset \cA^{\rho_1}$.

\begin{Lemma}[\cite{GoKo10a}]\label{lem:4.7}
Let $\gamma_{n_1}\gamma_{n_2}\gamma_{n_3} \cdots \gamma_{n_k}\gamma_{n_1}$ be a $k$-cycle (as above). Then
\[
E_{0}(v_{n_1} v_{n_2} v_{n_3} \cdots v_{n_k} v_{n_1}) 
=   
\begin{cases}
A_0^{k-1} & \text{if $n_1= 0$}\\
C_k  & \text{if $n_1 \neq 0$} 
\end{cases},
\]
where
\begin{eqnarray*}
A_0 := E_0(\pi(\gamma_1)), \qquad \qquad
C_k := E_{-1}(A_0^{k-1}) .
\end{eqnarray*}
\end{Lemma}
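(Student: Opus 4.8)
The plan is to compute $E_0(v_{n_1}v_{n_2}\cdots v_{n_k}v_{n_1})$ by exploiting the braidability/exchangeability of $(v_i)_{i\in\Nset}$ together with the two conventions in the Lemma. First I would treat the case $n_1 = 0$, where $\gamma_0 = e$, so $v_0 = \pi(e) = \1$ and the word becomes $v_{n_2}v_{n_3}\cdots v_{n_k}$ with $n_2,\ldots,n_k$ distinct nonzero indices; this is a product of $k-1$ distinct star generators. By exchangeability in the $\rho_1$-setting these may be reordered to $v_1 v_2 \cdots v_{k-1}$ without changing the joint distribution relative to the tail algebra $\cA^{\rho_1}$ (Corollary~\ref{cor:tail-spread}(i) applied with $E_\cT = E_0$), so I would reduce to showing $E_0(v_1 v_2 \cdots v_{k-1}) = A_0^{k-1}$. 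The key algebraic input here is full $\cA^{\rho_1}$-independence of the $v_i$'s (Theorem~\ref{thm:definetti-1}): since the singletons $\{1\},\{2\},\ldots,\{k-1\}$ are pairwise disjoint, $E_0$ factorizes over the product, giving $E_0(v_1)E_0(v_2)\cdots E_0(v_{k-1})$, and each factor equals $A_0 := E_0(\pi(\gamma_1)) = E_0(v_1)$ because the $v_i$ are identically distributed \emph{over the tail} — one must check that identical distribution lifts to $E_0$, which again follows from the lifted symmetry in Corollary~\ref{cor:tail-spread}. This yields $A_0^{k-1}$.

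For the case $n_1 \neq 0$ the word is $v_{n_1}v_{n_2}\cdots v_{n_k}v_{n_1}$ with $k+1$ factors but only $k$ distinct indices (the index $n_1$ repeated at both ends). The strategy is to split off the repeated outer factors using the module property of $E_0$ over $\cA^{\rho_1}$ and a conditioning argument: condition on the middle block. Concretely, I would first use exchangeability to move the configuration to the standard one where $n_1 = 1$ and $n_2,\ldots,n_k = 2,\ldots,k$, reducing to $E_0(v_1 v_2 \cdots v_k v_1)$. Then the idea is that $v_1$ at the two ends sees the independent "block" $\vN(\cA^{\rho_1}, v_2,\ldots,v_k)$ only through $\cA^{\rho_1}$: applying full independence of the sets $\{1\}$ and $\{2,\ldots,k\}$, one expects $E_0(v_1 \cdot w \cdot v_1) = E_0\big(v_1 \, E_0(w) \, v_1\big)$ where $w = v_2\cdots v_k$, because conditioning the middle onto $\cA^{\rho_1}$ and then using that $\cA^{\rho_1}$-valued elements commute past... no — one cannot simply commute, so instead I would argue: $v_1$ and the von Neumann algebra generated by $\cA^{\rho_1}$ and $v_2,\ldots,v_k$ are $\cA^{\rho_1}$-independent, hence $E_0(a\,v_1\,b) $ for $a,b$ in that algebra satisfies the appropriate module-factorization, yielding $E_0(v_1 w v_1) = E_0\big(v_1 E_0(w) v_1\big)$. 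By the $n_1=0$ computation (shifted by one index), $E_0(w) = E_0(v_2\cdots v_k) = A_0^{k-1}$, an element of $\cA^{\rho_1}$. Therefore $E_0(v_1 v_2\cdots v_k v_1) = E_0(v_1 A_0^{k-1} v_1) = E_0\big(v_1 A_0^{k-1} v_1\big)$; now I would observe that the quantity $v_1 A_0^{k-1} v_1$, pushed down by $E_0$, produces an element lying in the \emph{center} $\cZ(\cA) = \cA^\rho$, because conjugating $\cA^{\rho_1}$ by $u_1 = v_1$ and intersecting fixed-point algebras forces invariance under all of $\rho(\Bset_\infty)$. Matching this with the definition $C_k = E_{-1}(A_0^{k-1})$ identifies the answer.

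\textbf{Main obstacle.} The step I expect to be the hardest is the last one: showing that $E_0(v_1 A_0^{k-1} v_1)$ equals $E_{-1}(A_0^{k-1})$, i.e.\ that averaging $A_0^{k-1} \in \cA^{\rho_1}$ by conjugation with $u_1$ (and projecting) lands in the center $\cA^\rho$ and moreover coincides with the center-valued conditional expectation $E_{-1}$. The point is that $\rho_1(\sigma_i) = \Ad u_{i+1}^{-1}$ fixes $\cA^{\rho_1}$ pointwise, but $\Ad u_1$ does not; conjugating by $u_1$ and then re-applying $E_0$ should, via the tower $\cZ(\cA) = \cA^\rho \subset \cA^{\rho_1} \subset \cA$ and the commutation relations \eqref{eq:B1}, \eqref{eq:B2}, generate invariance under the full group. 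One must verify the identity $E_0 \circ \Ad u_1|_{\cA^{\rho_1}} = E_{-1}$ on $\cA^{\rho_1}$, equivalently that $E_{-1}$ is the unique trace-preserving projection onto $\cA^\rho$ and that $E_0(u_1 z u_1^{-1})$ for $z \in \cA^{\rho_1}$ depends only on $E_{-1}(z)$; alternatively one can re-derive this by yet another exchangeability/mean-ergodic argument on the shifted star generators, mirroring the example in Section~3. Handling the operator $A_0^{k-1}$ (which need not be self-adjoint) and keeping careful track of the adjoints $\epsilon_j$ in the general word is routine bookkeeping but must be done with care; I would relegate it, as the authors do, to the "easy exercise" bin once the structural identity above is in hand.
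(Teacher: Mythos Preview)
Your outline matches the paper up to and including the reduction
\[
E_0(v_{n_1}v_{n_2}\cdots v_{n_k}v_{n_1})=E_0\big(v_{n_1}A_0^{k-1}v_{n_1}\big),
\]
obtained from full $\cA^{\rho_1}$-independence of the $(v_i)$ (your commuting-square justification for inserting $E_0$ in the middle is fine). The case $n_1=0$ is then identical to the paper's.

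For $n_1\neq 0$ the paper takes a different and cleaner route than your primary suggestion. Rather than trying to force $E_0(u_1 z u_1)\in\cZ(\cA)$ by a bare fixed-point/commutation argument, the paper runs the constructive procedure \emph{a second time}, now with the \emph{unshifted} representation $\rho$ and with an arbitrary $x_0\in\cA^{\rho_1}$ as seed. One checks $x_n=\rho(\sigma_n\cdots\sigma_1)x_0=v_n x_0 v_n$, so that $v_{n_1}A_0^{k-1}v_{n_1}$ is literally the $n_1$-th term of this new exchangeable sequence. Applying the de Finetti theorem \emph{again} to this sequence gives independence over $\cA^{\rho}=\cZ(\cA)$, and the independence of $\cM_0=\cA^{\rho_1}$ from $\cM_{n_1}=v_{n_1}\cA^{\rho_1}v_{n_1}$ forces $E_0\big|_{\cM_{n_1}}=E_{-1}\big|_{\cM_{n_1}}$ (compare $\trace(x y)=\trace(E_{-1}(x)y)$ for $y\in\cA^{\rho_1}$). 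The final equality $E_{-1}(v_{n_1}A_0^{k-1}v_{n_1})=E_{-1}(A_0^{k-1})$ is then immediate because $E_{-1}$, as the $\trace$-preserving conditional expectation onto the center, is a center-valued \emph{trace}; this is a point you do not mention and it replaces any need to ``commute things past'' $v_{n_1}$.

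Your direct approach (``conjugating $\cA^{\rho_1}$ by $u_1$ and intersecting fixed-point algebras forces invariance under all of $\rho$'') does not go through as stated: $\Ad u_1$ does not preserve $\cA^{\rho_1}$, and $E_0$ does not commute with $\Ad u_1$, so there is no a priori reason $E_0(u_1 z u_1)$ should commute with $u_1$. Your fallback (``yet another exchangeability/mean-ergodic argument'') is exactly what the paper does, but the specific content --- build the second sequence via the unshifted $\rho$, identify its tail as $\cZ(\cA)$, and finish with the tracial property of $E_{-1}$ --- is the missing idea you should supply. Also, the preliminary reduction to $n_1=1$ is unnecessary: the paper's argument works uniformly for all $n_1\ge 1$.
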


Note that, because the $\gamma_i$'s are transpositions, the representing operators $v_i = \pi(\gamma_i)$ are idempotent unitaries. This will be used repeatedly without further comment.
It follows that $A_0$ and the $C_k$'s are selfadjoint contractions.
We call these and similar objects \emph{limit cycles} because they are weak limits (ergodic averages) of represented cycles. A systematic study of such limit cycles is undertaken in \cite{GoKo10a}.
Here we develop just enough of this theory to be able to include a proof of the lemma.
\begin{proof}
We know from the discussion above that the sequence $\big(v_{i}\big)_{i\in \Nset}$ is fully $\cA^{\rho_1}$-independent.  
Thus
$
E_{0}(v_{n_1} v_{n_2} v_{n_3} \cdots v_{n_k} v_{n_1}) 
=  E_{0}\Big( v_{n_1} A_0^{k-1} v_{n_1}\Big).
$
For $n_1=0$ we are done because $\gamma_0 = e$. To prepare the argument for the case
$n_1 \neq 0$ choose an element $x_0 \in \cA^{\rho_1}$ and apply the constructive procedure with the (unshifted!) representation $\rho$ to obtain a sequence $x_0, x_1, x_2, \ldots$ 
Localization here amounts to 
the fact that $x_0$ commutes with $u_2, u_3, \ldots$, this is true because $x_0 \in \cA^{\rho_1}$. Hence the sequence
$\big(x_i\big)_{i \ge 0}$ is exchangeable and, by the noncommutative de Finetti theorem, independent over $\cZ(\cA) = \cA^\rho$.
Explicitly,  using again that $x_0 \in \cA^{\rho_1}$,
\[
x_n = \rho(\sigma_n \ldots \sigma_2 \sigma_1) x_0
= \rho(\sigma_n \ldots \sigma_2 \sigma_1 \sigma_2 \ldots \sigma_n) x_0 = \rho(\gamma_n) x_0 = v_n x_0 v_n.
\]
With these results we can now evaluate $E_{0}(v_{n_1} v_{n_2} v_{n_3} \cdots v_{n_k} v_{n_1})$ also for $n_1 \not=0$. In fact, 
\begin{eqnarray*}
E_{0}\Big( v_{n_1} A_0^{k-1} v_{n_1}\Big) 
&=& E_{-1}\Big( v_{n_1} A_0^{k-1} v_{n_1}\Big) \\
&=& E_{-1}\Big( A_0^{k-1} \Big) = C_k\,. 
\end{eqnarray*}
The first equality follows from independence over $\cZ(\cA)$, the second equality follows from the fact that $E_{-1}$, as a $\trace$-preserving conditional expectation onto the center $\cZ(\cA)$, is a center-valued trace. 
\end{proof}

Based on this analysis we obtain a lot of structural information about characters of $\Sset_\infty$ without further work. We give a summary in the following theorem.

\begin{Theorem}[\cite{GoKo10a}]
The fixed point algebra 
$
\cA^{\rho_1} = \vN(A_0, C_k \mid k \in \Nset) 
$
is commutative. Moreover the following are equivalent:
\begin{enumerate}
\item[(i)]
$\cA$ is a factor;
\item[(ii)]
The $C_k$'s are trivial;
\item[(iii)]
$\cA^{\rho_1}$ is generated by $A_0$.
\end{enumerate}
Further
$\cA^{\rho_1} = \Cset$ iff the (subfactor) inclusion $\vN(\Sset_{2,\infty}) \subset \vN(\Sset_{\infty})$ is irreducible. 
\end{Theorem}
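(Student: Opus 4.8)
The plan is to identify $\cA^{\rho_1}$ explicitly first, and then reduce the commutativity, the equivalences and the irreducibility criterion to that identification; the one genuinely non-trivial implication turns out to be (iii) $\Rightarrow$ (i).

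\emph{Identifying $\cA^{\rho_1}$.} First I would show $\cA^{\rho_1}=\vN(A_0,C_k\mid k\in\Nset)$. Since the $v_i=\pi(\gamma_i)$ generate $\cA$ and the conditional expectation $E_0$ onto $\cA^{\rho_1}$ is normal, $\cA^{\rho_1}=E_0(\cA)$ is the weak$^*$-closed linear span of the elements $E_0(v_{i_1}v_{i_2}\cdots v_{i_m})$. For such a monomial, decompose the permutation $\gamma_{i_1}\gamma_{i_2}\cdots\gamma_{i_m}$ into disjoint cycles $\tau_1\cdots\tau_r$. By Corollary~\ref{cor:cycles} the factors $\pi(\tau_l)$ sit inside von Neumann subalgebras generated by pairwise disjoint families of star generators, so full $\cA^{\rho_1}$-independence of $(v_i)$ (Theorem~\ref{thm:definetti-1} together with $\cA^\cT=\cA^{\rho_1}$, Theorem~\ref{thm:fix}) gives $E_0(v_{i_1}\cdots v_{i_m})=\prod_{l=1}^{r}E_0(\pi(\tau_l))$, and Lemma~\ref{lem:4.7} evaluates each factor as a power of $A_0$ (if $0$ occurs in $\tau_l$) or as some $C_k$ (if not). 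Hence the left-hand side always lies in $\vN(A_0,C_k\mid k)$; the reverse inclusion is trivial since $A_0\in\cA^{\rho_1}$ and $C_k\in\cZ(\cA)\subset\cA^{\rho_1}$. Commutativity then follows at once: $A_0$ is self-adjoint, the $C_k$ are central and so commute with $A_0$, and the von Neumann algebra generated by two commuting abelian algebras is abelian.

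\emph{The centre and (i) $\Leftrightarrow$ (ii).} Next I would apply the normal conditional expectation $E_{-1}$ onto $\cZ(\cA)$ to this description. Since the $C_k$ are central and $C_k=E_{-1}(A_0^{k-1})$, the map $E_{-1}$ sends every monomial in $A_0$ and the $C_k$ to a product of $C_k$'s, and since $\cZ(\cA)=E_{-1}(\cA^{\rho_1})$ this gives $\cZ(\cA)=\vN(C_k\mid k\in\Nset)$. Hence (i) $\Leftrightarrow$ (ii): $\cA$ is a factor iff $\cZ(\cA)=\Cset$ iff every $C_k$ is a scalar, i.e.\ trivial.

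\emph{The remaining equivalence.} The implication (ii) $\Rightarrow$ (iii) is immediate, since trivial $C_k$'s do not enlarge $\vN(A_0)$. For (iii) $\Rightarrow$ (i) I would work through the relation $\cZ(\cA)=\cA^\rho=\cA\cap\{u_j\mid j\ge1\}'=\cA^{\rho_1}\cap\{u_1\}'$ (using $\cA^{\rho_1}=\cA\cap\{u_j\mid j\ge2\}'$): under (iii) this reads $\cZ(\cA)=\vN(A_0)\cap\{u_1\}'$, so one has to show that the only elements of $\vN(A_0)$ commuting with $u_1$ are scalars. Here I would exploit that $A_0=\sotlim_n\tfrac1n\sum_{k=1}^{n}v_k$ is a mean-ergodic average of the symmetries $v_k=\pi((0,k))$, with $u_1=v_1$, and analyse $u_1 f(A_0)u_1^{*}$ via the limit-cycle calculus developed in \cite{GoKo10a}. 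I expect this to be the main obstacle; everything else is bookkeeping with the de Finetti theorem and Lemma~\ref{lem:4.7}.

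\emph{The irreducibility criterion.} The final assertion is then just an unwinding of definitions: $\cA^{\rho_1}=\cA\cap\{u_j\mid j\ge2\}'=\vN(\Sset_\infty)\cap\vN(\Sset_{2,\infty})'$, where $\Sset_{2,\infty}=\langle\sigma_2,\sigma_3,\ldots\rangle$, so $\cA^{\rho_1}=\Cset$ says precisely that the relative commutant of $\vN(\Sset_{2,\infty})$ in $\vN(\Sset_\infty)$ is trivial, i.e.\ that the inclusion is irreducible.
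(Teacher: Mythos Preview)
Your argument for the identification $\cA^{\rho_1}=\vN(A_0,C_k\mid k\in\Nset)$ and for commutativity is exactly the paper's: decompose an arbitrary permutation into disjoint cycles, invoke Proposition~\ref{prop:4.6} to factorise $E_0$ over the cycles, and read off the factors from Lemma~\ref{lem:4.7}. Your treatment of the irreducibility criterion is likewise just an unwinding of the definition of $\cA^{\rho_1}$ as a relative commutant, as intended.

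For the equivalences you actually supply more than the paper does. The paper's proof here is a single sentence (``the other statements are obtained by considering special cases''), with the detailed argument deferred to \cite{GoKo10a}. Your derivation of $\cZ(\cA)=\vN(C_k\mid k)$ by pushing the generators through $E_{-1}$, and hence of (i)$\Leftrightarrow$(ii) and (ii)$\Rightarrow$(iii), is correct and is the natural way to flesh out that sentence. You are also right that (iii)$\Rightarrow$(i) is the one implication that is not pure bookkeeping: showing that $\vN(A_0)\cap\{u_1\}'=\Cset$ genuinely requires the finer analysis of the limit cycles carried out in \cite{GoKo10a}, and the present paper does not attempt to reproduce it. So your assessment of where the real content lies matches the paper's implicit division of labour between this survey and the full reference.
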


\begin{proof}
Any permutation is a product of disjoint cycles. Combining Proposition \ref{prop:4.6} and Lemma \ref{lem:4.7} we find that $\cA^{\rho_1} = E_0 (\cA)$ is generated by $A_0$ and by the $C_k$'s. Because the $C_k$'s are in the center it is clear that $\cA^{\rho_1}$ is commutative. The other statements are obtained by considering special cases.
\end{proof}

\begin{Corollary}[Thoma Multiplicativity {\cite[Theorem 3.7]{GoKo10a}}]
Let $m_k(\tau)$ be the number of $k$-cycles in the cycle decomposition of the permutation $\tau \in \Sset_\infty$. 
Then
\[
E_{-1}\big( \tau \big)
=  \prod_{k =2}^{\infty} C_k^{m_k(\tau)} ,
\]
where $C_k = E_{-1}(A_0^{k-1})$.
\end{Corollary}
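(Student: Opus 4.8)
The plan is to combine the cycle decomposition of a permutation with the $\cA^{\rho_1}$-independence of star generators supported on disjoint index sets. First I would write $\tau = \tau_1 \tau_2 \cdots \tau_r$ as a product of disjoint (nontrivial) cycles, where $\tau_\ell$ is a $k_\ell$-cycle; reordering the list so that at most one of them moves the point $0$, I may assume $\tau_2, \ldots, \tau_r$ all fix $0$, and $\tau_1$ either fixes $0$ or not. By Corollary \ref{cor:cycles} each $\tau_\ell$ is expressed through star generators $\gamma_j$ with $j$ ranging over pairwise disjoint subsets $I_\ell \subset \Nset$. Hence by Proposition \ref{prop:4.6} (applied iteratively, using that a finite product of mutually $\cA^{\rho_1}$-independent subalgebras factorizes under $E_0$) we get
\[
E_0\big(\pi(\tau)\big) = E_0\big(\pi(\tau_1)\big)\, E_0\big(\pi(\tau_2)\big) \cdots E_0\big(\pi(\tau_r)\big).
\]

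Next I would evaluate each factor via Lemma \ref{lem:4.7}. For a $k_\ell$-cycle $\tau_\ell$ with $\tau_\ell(0) \neq 0$ the lemma gives $E_0(\pi(\tau_\ell)) = C_{k_\ell}$, while for the (at most one) cycle $\tau_1$ with $\tau_1(0) \neq 0$ we instead get $E_0(\pi(\tau_1)) = A_0^{k_1 - 1}$. So, if $0$ is not fixed by $\tau$,
\[
E_0\big(\pi(\tau)\big) = A_0^{k_1 - 1} \prod_{\ell \ge 2} C_{k_\ell},
\]
and if $0$ is fixed by $\tau$ then all factors are of the form $C_{k_\ell}$ and there is no $A_0$-term. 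Grouping equal cycle lengths and recalling $C_k = E_{-1}(A_0^{k-1})$, in the first case the product over $\ell \ge 2$ contributes $\prod_{k\ge2} C_k^{m_k(\tau) - \delta}$ where $\delta = 1$ for $k = k_1$ accounts for the cycle pulled out as the $A_0$-term; in the second case it is simply $\prod_{k\ge2} C_k^{m_k(\tau)}$.

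Finally I would apply $E_{-1}$ to both sides, using the tower property $E_{-1} = E_{-1} \circ E_0$ (legitimate since $\cZ(\cA) = \cA^\rho \subset \cA^{\rho_1}$), together with the module property of $E_{-1}$ over its range and the fact that the $C_k$'s lie in $\cZ(\cA)$. In the case $\tau(0) \neq 0$ this turns the lone $A_0^{k_1-1}$ factor into $E_{-1}(A_0^{k_1-1}) = C_{k_1}$, exactly restoring the missing factor, so both cases collapse to $E_{-1}(\pi(\tau)) = \prod_{k\ge2} C_k^{m_k(\tau)}$. I expect the main obstacle to be purely bookkeeping: carefully handling the asymmetry caused by the special role of the point $0$ (only one cycle can contain it, and that cycle produces an $A_0$-power rather than a $C_k$), and checking that iterating the two-factor independence of Proposition \ref{prop:4.6} genuinely yields the full multi-factor factorization under $E_0$ — which it does because disjoint unions of the $I_\ell$ remain disjoint, so one can split off one cycle at a time. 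No deep new idea is needed beyond what Lemma \ref{lem:4.7} and Corollary \ref{cor:cycles} already provide.
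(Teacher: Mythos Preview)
Your approach is exactly the one implicit in the paper: decompose $\tau$ into disjoint cycles, use Proposition~\ref{prop:4.6} to factorize $E_0(\pi(\tau))$ over the cycles, evaluate each factor via Lemma~\ref{lem:4.7}, and then apply $E_{-1} = E_{-1}\circ E_0$ together with the module property over the center to convert the lone $A_0^{k_1-1}$ into $C_{k_1}$. One small slip: in the sentence ``For a $k_\ell$-cycle $\tau_\ell$ with $\tau_\ell(0)\neq 0$ the lemma gives $E_0(\pi(\tau_\ell))=C_{k_\ell}$'' you mean $\tau_\ell(0)=0$ (the cycle \emph{fixes} $0$, so $n_1\neq 0$ in Lemma~\ref{lem:4.7}); as written it contradicts the very next clause, but your subsequent case analysis shows you have the correct dichotomy in mind.
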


If $\cA$ is a factor, then $E_{-1}$ can be replaced by the tracial state $\trace$ and we obtain
\[
\trace(\tau) =  \prod_{k =2}^{\infty} 
\big[\trace(A_0^{k-1})\big]^{m_k(\tau)}.
\]

Thoma's theorem \cite{Thom64a} deals with extremal characters and hence with the factorial case. One of the most remarkable features of Thoma's classification is the fact that all extremal characters are multiplicative with respect to the disjoint cycle decomposition. We have reproduced this fact (Thoma multiplicativity) above. In fact, we have already achieved more. Our formula suggests that the explicit form in Thoma's theorem is now achievable with the help of a spectral analysis for the selfadjoint contraction $A_0$. Noncommutative independence continues to be a guide for this part of the proof. We refer to \cite{GoKo10a} for the details.

\bibliographystyle{alpha}                 
\label{section:bibliography}

\end{document}